\newtheorem{theorem}{Theorem}
\theoremstyle{plain}
\newtheorem{corollary}{Corollary}
\newtheorem{lemma}{Lemma}
\newtheorem{remark}{Remark}
\numberwithin{equation}{section}
\begin{document}
\title[]{The quotient shapes of normed spaces and application}
\author{Nikica Ugle\v{s}i\'{c} }
\address{ Veli R\aa t, Dugi Otok, Hrvatska}
\email{nuglesic@unizd.hr}
\date{February 26, 2018}
\subjclass[2000]{[2010] Primary 54E99; Secondary 55P55 }
\keywords{normed (Banach, Hilbert) vectorial space, quotient normed space,
algebraic dimension, expansion, (infinite) cardinal, (general) continuum
hypothesis, quotient shape, continuous linear extension. }
\dedicatory{}
\thanks{This paper is in final form and no version of it will be submitted
for publication elsewhere.}

\begin{abstract}
The quotient shape types of normed vectorial spaces (over the same field)
with respect to Banach spaces reduce to those of Banach spaces. The finite
quotient shape type of normed spaces is an invariant of the (algebraic)
dimension, but not conversely. The converse holds for separable normed
spaces as well as for the bidual-like spaces (isomorphic to their second
dual spaces). As a consequence, the Hilbert space $l_{2}$, or even its
(countably dimensional, unitary) direct sum subspace may represent the
unique quotient shape type of all $2^{\aleph _{0}}$-dimensional normed
spaces. An application yields two extension type theorems.
\end{abstract}

\maketitle

\section{Introduction}

The shape theory began as a generalization of the homotopy theory such that
the locally bad spaces can be also considered and classified in a very
suitable \textquotedblleft homotopical\textquotedblright\ way. The first
step (for compacta in the Hilbert cube) had made K. Borsuk, [1]. The theory
was rapidly developed and generalized by many authors. The main references
are [2], [3], [5] and, especially, [10]. Although, in general, founded
purely categorically, a shape theory is mostly well known only as \emph{the}
(standard) shape theory of topological spaces with respect to spaces having
the homotopy types of polyhedra. The generalizations founded in [7] and [16]
are, primarily, also on that line.

The quotient shape theory was introduced a few years ago by the author,
[12]. It is, of course, a kind of the general (abstract) shape theory, [10],
I. 2. However, it is possible and non-trivial, and can be straightforwardly
developed for every concrete category $\mathcal{C}$ and for every infinite
cardinal $\kappa \geq \aleph _{0}$. Concerning a shape of objects, in
general, one has to decide which ones are \textquotedblleft
nice\textquotedblright\ absolutely and/or relatively (with respect to the
chosen ones). In this approach, the main principle reads as follows: \emph{%
An object is \textquotedblleft nice\textquotedblright\ if it is isomorphic
to a quotient object belonging to a special full subcategory and if it (its
\textquotedblleft basis\textquotedblright ) has cardinality less than (less
than or equal to) a given infinite cardinal. }It leads to the basic idea: to
approximate a $\mathcal{C}$-object $X$ by a suitable inverse system
consisting of its quotient objects $X_{\lambda }$ (and the quotient
morphisms) which have cardinalities, or dimensions - in the case of
vectorial spaces, less than (less than or equal to) $\kappa $. Such an
approximation exists in the form of any $\kappa ^{-}$-expansion ($\kappa $%
-expansion) of $X$,

$\boldsymbol{p}_{\kappa ^{-}}=(p_{\lambda }):X\rightarrow \boldsymbol{X}%
_{\kappa ^{-}}=(X_{\lambda },p_{\lambda \lambda ^{\prime }},\Lambda _{\kappa
^{-}})$

($\boldsymbol{p}_{\kappa }=(p_{\lambda }):X\rightarrow \boldsymbol{X}%
_{\kappa }=(X_{\lambda },p_{\lambda \lambda ^{\prime }},\Lambda _{\kappa })$%
),

\noindent where $\boldsymbol{X}_{\kappa ^{-}}$ ($\boldsymbol{X}_{\kappa }$)
belongs to the subcategory $pro$-$\mathcal{D}_{\kappa ^{-}}$ ($pro$-$%
\mathcal{D}_{\kappa }$) of $pro$-$\mathcal{D}$, and $\mathcal{D}_{\kappa
^{-}}$ ($\mathcal{D}_{\kappa }$) is the subcategory of $\mathcal{D}$
determined by all the objects having cardinalities, or dimensions - for
vectorial spaces, less than (less than or equal to) $\kappa $, while $%
\mathcal{D}$ is a full subcategory of $\mathcal{C}$. Clearly, if $X\in Ob(%
\mathcal{D})$ and the cardinality $\left\vert X\right\vert <\kappa $ ($%
\left\vert X\right\vert \leq \kappa $), then the rudimentary pro-morphism $%
\left\lfloor 1_{X}\right\rfloor :X\rightarrow \left\lfloor X\right\rfloor $
is a $\kappa ^{-}$-expansion ($\kappa $-expansion) of $X$. The corresponding
shape category $Sh_{\mathcal{D}_{\kappa ^{-}}}(\mathcal{C})$ ($Sh_{\mathcal{D%
}_{\kappa }}(\mathcal{C})$) and shape functor $S_{\kappa ^{-}}:\mathcal{C}%
\rightarrow Sh_{\mathcal{D}_{\kappa ^{-}}}(\mathcal{C})$ ($S_{\kappa }:%
\mathcal{C}\rightarrow Sh_{\mathcal{D}_{\kappa }}(\mathcal{C})$) exist by
the general (abstract) shape theory, and they have all the appropriate
general properties. Moreover, there exist the relating functors $S_{\kappa
^{-}\kappa }:Sh_{\mathcal{D}_{\kappa }}(\mathcal{C})\rightarrow Sh_{\mathcal{%
D}_{\kappa ^{-}}}(\mathcal{C})$ and $S_{\kappa \kappa ^{\prime }}:Sh_{%
\mathcal{D}_{\kappa ^{\prime }}}(\mathcal{C})\rightarrow Sh_{\mathcal{D}%
_{\kappa }}(\mathcal{C})$, $\kappa \leq \kappa ^{\prime }$, such that $%
S_{\kappa ^{-}\kappa }S_{\kappa }=S_{\kappa ^{-}}$ and $S_{\kappa \kappa
^{\prime }}S_{\kappa ^{\prime }}=S_{\kappa }$. Even in simplest case of $%
\mathcal{D}=\mathcal{C}$, the quotient shape classifications are very often
non-trivial and very interesting. In such a case we simplify the notation $%
Sh_{\mathcal{D}_{\kappa ^{-}}}(\mathcal{C})$ ($Sh_{\mathcal{D}_{\kappa }}(%
\mathcal{C})$) to $Sh_{\kappa ^{-}}(\mathcal{C})$ ($Sh_{\kappa }(\mathcal{C}%
) $) or to $Sh_{\kappa ^{-}}$ ($Sh_{\kappa }$) when $\mathcal{C}$ is fixed.

In [12], several well known concrete categories were considered and many
examples are given which show that the quotient shape theory yields
classifications strictly coarser than those by isomorphisms. In [13] and
[14] were considered the quotient shapes of (purely algebraic, topological
and normed) vectorial spaces and topological spaces, respectively. In the
recent paper [15], we have continued the studying of quotient shapes of
normed vectorial spaces of [13], Section 4.1, primarily and \emph{separately}
focused to the well known $l_{p}$ and $L_{p}$ spaces and to the Sobolev
spaces $W_{p}^{(k)}(\Omega _{n})$ (of all real functions on $\Omega _{n}$
having their supports in a domain $\Omega _{n}$ and all partial derivatives
up to order $k$ continuous). The main global result of [15] is that the 
\emph{finite} quotient shape type of a normed spaces (over the field $F\in \{%
\mathbb{R},\mathbb{C}\}$) reduces to that of its completion (Banach) spaces,
and consequently, that the quotient shape theory of $(NVect_{F},(NVect_{F})_{%
\text{\b{0}}})$ reduces to that of $(BVect_{F},(BVect_{F})_{\text{\b{0}}})$

In this work we have clarified the relationship between the quotient shape
theories of normed and that of Banach spaces (Theorem 1). Further, we have
proven that the finite quotient shape type of normed spaces is an invariant
of the (algebraic) dimension, but not conversely. The counterexamples exist
in the dimensional par $\{\aleph _{0},2^{\aleph _{0}}\}$. In the case of
separable Banach spaces, the classifications by dimension and by the finite
quotient shape (as well as by the countable quotient shape) coincide
(Theorem 2). Consequently, all the infinite-dimensional separable normed
spaces over the same field belong to a unique quotient shape type with
respect to Banach spaces. Its representative may be, for instance, the
Hilbert space $l_{2}$, or its (countably infinite dimensional, unitary)
direct sum subspace $F_{0}^{\mathbb{N}}(2)\equiv (F_{0}^{\mathbb{N}%
},\left\Vert \cdot \right\Vert _{2})$ (Corollary 1).

An application yields two extension type theorems for the category $s%
\mathcal{B}\cup b\mathcal{B}$ ($s\mathcal{B}$ - separable Banach; $b\mathcal{%
B}$ - bidual-like Banach spaces, i.e., $X\cong X^{\ast \ast }$), provided a
subspace has the top-dimensional closure and a lower codimension, into lower
dimensional Banach spaces (Theorems 3 and its \textquotedblleft operable
realization\textquotedblright\ - Theorem 4).

\section{Preliminaries}

We shall frequently use and apply in the sequel several general or special
well known facts without referring to any source. So we remind a reader that

\noindent - our general shape theory technique is that of [10];

\noindent - the needed set theoretic (especially, concerning cardinals) and
topological facts can be found in [4];

\noindent - the facts concerning functional analysis are taken from [8], [9]
or [11];

\noindent - our category theory language follows that of [6].

For the sake of completeness, let us briefly repeat the construction of a
quotient shape category and a quotient shape functor, [12]. Given a category
pair $(\mathcal{C},\mathcal{D}$), where $\mathcal{D}\subseteq \mathcal{C}$
is full, and a cardinal $\kappa $, let $\mathcal{D}_{\kappa ^{-}}$ ($%
\mathcal{D}_{\kappa }$) denote the full subcategory of $\mathcal{D}$
determined by all the objects having cardinalities or, in some special
cases, the cardinalities of \textquotedblleft bases\textquotedblright\ less
than (less or equal to) $\kappa $. By following the main principle, let $(%
\mathcal{C},\mathcal{D}_{\kappa ^{-}})$ ($(\mathcal{C},\mathcal{D}_{\kappa
}) $) be such a pair of \emph{concrete} categories. If

\noindent (a) every $\mathcal{C}$-object $(X,\sigma )$ admits a directed set 
$R(X,\sigma ,\kappa ^{-})\equiv \Lambda _{\kappa ^{-}}$ ($R(X,\sigma ,\kappa
)\equiv \Lambda _{\kappa }$) of equivalence relations $\lambda $ on $X$ such
that each quotient object $(X/\lambda ,\sigma _{\lambda })$ has to belong to 
$\mathcal{D}_{\kappa ^{-}}$ ($\mathcal{D}_{\kappa }$), while each quotient
morphism $p_{\lambda }:(X,\sigma )\rightarrow (X/\lambda ,\sigma _{\lambda
}) $ has to belong to $\mathcal{C}$;

\noindent (b) the induced morphisms between quotient objects belong to $%
\mathcal{D}_{\kappa ^{-}}$ ($\mathcal{D}_{\kappa }$);

\noindent (c) every morphism $f:(X,\sigma )\rightarrow (Y,\tau )$ of $%
\mathcal{C}$, having the codomain in $\mathcal{D}_{\kappa ^{-}}$ ($\mathcal{D%
}_{\kappa }$), factorizes uniquely through a quotient morphism $p_{\lambda
}:(X,\sigma )\rightarrow (X/\lambda ,\sigma _{\lambda })$, $f=gp_{\lambda }$%
, with $g$ belonging to $\mathcal{D}_{\kappa ^{-}}$ (\thinspace $\mathcal{D}%
_{\kappa }$),

\noindent then $\mathcal{D}_{\kappa ^{-}}$ ($\mathcal{D}_{\kappa }$) is a
pro-reflective subcategory of $\mathcal{C}$. Consequently, there exists a
(non-trivial) \emph{(quotient)\ shape\ category }$Sh_{(\mathcal{C},\mathcal{D%
}_{\kappa ^{-}})}\equiv Sh_{\mathcal{D}_{\kappa ^{-}}}(\mathcal{C})$ ($Sh_{(%
\mathcal{C},\mathcal{D}_{\kappa })}\equiv Sh_{\mathcal{D}_{\kappa }}(%
\mathcal{C})$) obtained by the general construction.

Therefore, a $\kappa ^{-}$-shape morphism $F_{\kappa ^{-}}:(X,\sigma
)\rightarrow (Y,\tau )$ is represented by a diagram (in $pro$-$\mathcal{C}$)

$%
\begin{array}{ccc}
(\boldsymbol{X},\boldsymbol{\sigma })_{\kappa ^{-}} & \overset{p_{\kappa
^{-}}}{\leftarrow } & (X,\sigma ) \\ 
\boldsymbol{f}_{\kappa ^{-}}\downarrow &  &  \\ 
(\boldsymbol{Y},\boldsymbol{\tau })_{\kappa ^{-}} & \overset{q_{\kappa ^{-}}}%
{\leftarrow } & (Y,\tau )%
\end{array}%
$

\noindent (with $\boldsymbol{p}_{\kappa ^{-}}$ and $\boldsymbol{q}_{\kappa
^{-}}$ - a pair of appropriate expansions), and similarly for a $\kappa $%
-shape morphism $F_{\kappa }:(X,\sigma )\rightarrow (Y,\tau )$. Since all $%
\mathcal{D}_{\kappa ^{-}}$-expansions ($\mathcal{D}_{\kappa }$-expansions)
of a $\mathcal{C}$-object are mutually isomorphic objects of $pro$-$\mathcal{%
D}_{\kappa ^{-}}$ ($pro$-$\mathcal{D}_{\kappa }$), the composition and
identities follow straightforwardly. Observe that every quotient morphism $%
p_{\lambda }$ is an effective epimorphism. (If $U$ is the forgetful functor,
then $U(p_{\lambda })$ is a surjection), and thus condition (E2) for an
expansion follows trivially.

The corresponding \emph{\textquotedblleft quotient\ shape\textquotedblright\
functors} $S_{\kappa ^{-}}:\mathcal{C}\rightarrow Sh_{\mathcal{D}_{\kappa
^{-}}}(\mathcal{C})$ and $S_{\kappa }:\mathcal{C}\rightarrow Sh_{\mathcal{D}%
_{\kappa }}(\mathcal{C})$ are defined in the same general manner. That means,

$S_{\kappa ^{-}}(X,\sigma )=S_{\kappa }(X,\sigma )=(X,\sigma )$;

if $f:(X,\sigma )\rightarrow (Y,\tau )$ is a $\mathcal{C}$-morphism, then,
for every $\mu \in M_{\kappa ^{-}}$, the composite $g_{\mu }f:(Y,\tau
)\rightarrow (Y_{\mu },\tau _{\mu })$ factorizes (uniquely) through a $%
p_{\lambda (\mu )}:(X,\sigma )\rightarrow (X_{\lambda (\mu )},\sigma
_{\lambda (\mu )}),$ and thus, the correspondence $\mu \mapsto \lambda (\mu
) $ yields a function $\phi :M_{\kappa ^{-}}\rightarrow \Lambda _{\kappa
^{-}}$ and a family of $\mathcal{D}_{\kappa ^{-}}$-morphisms $f_{\mu
}:(X_{\phi (\mu )},\sigma _{\phi (\mu )})\rightarrow (Y_{\mu },\tau _{\mu })$
such that $q_{\mu }f=f_{\mu }p_{\phi (\mu )}$;

\noindent one easily shows that $(\phi ,f_{\mu }):(\boldsymbol{X},%
\boldsymbol{\sigma })_{\kappa ^{-}}\rightarrow (\boldsymbol{Y},\boldsymbol{%
\tau })_{\kappa ^{-}}$ is a morphism of $inv$-$\mathcal{D}_{\kappa ^{-}}$,
so the equivalence class $\boldsymbol{f}_{\kappa ^{-}}=[(\phi ,f_{\mu })]:(%
\boldsymbol{X},\boldsymbol{\sigma })_{\kappa ^{-}}\rightarrow (\boldsymbol{Y}%
,\boldsymbol{\tau })_{\kappa ^{-}}$ is a morphism of $pro$-$\mathcal{D}%
_{\kappa ^{-}}$;

\noindent then we put $S_{\kappa ^{-}}(f)=\left\langle \boldsymbol{f}%
_{\kappa ^{-}}\right\rangle \equiv F_{\kappa ^{-}}:(X,\sigma )\rightarrow
(Y,\tau )$ in $Sh_{\mathcal{D}_{\kappa ^{-}}}(\mathcal{C})$.

\noindent The identities and composition are obviously preserved. In the
same way one defines the functor $S_{\kappa }$.

Furthermore, since $(\boldsymbol{X,\sigma })_{\kappa ^{-}}$ is a subsystem
of $(\boldsymbol{X,\sigma })_{\kappa }$ (more precisely, $(\boldsymbol{%
X,\sigma })_{\kappa }$ is a subobject of $(\boldsymbol{X,\sigma })_{\kappa
^{-}}$ in $pro$-$\mathcal{D}$), one easily shows that there exists a functor 
$S_{\kappa ^{-}\kappa }:Sh_{\mathcal{D}_{\kappa }}(\mathcal{C})\rightarrow
Sh_{\mathcal{D}_{\kappa ^{-}}}(\mathcal{C})$ such that $S_{\kappa ^{-}\kappa
}S_{\kappa }=S_{\kappa ^{-}}$, i.e., the diagram

$%
\begin{array}{ccccc}
&  & \mathcal{C} &  &  \\ 
& \swarrow S_{\kappa ^{-}} &  & S_{\kappa }\searrow &  \\ 
Sh_{\mathcal{D}_{\kappa ^{-}}}(\mathcal{C}) &  & \underleftarrow{S_{\kappa
^{-}\kappa }} &  & Sh_{\mathcal{D}_{\kappa }}(\mathcal{C})%
\end{array}%
$

\noindent commutes. Moreover, an analogous functor $S_{\kappa \kappa
^{\prime }}:Sh_{\mathcal{D}_{\kappa ^{\prime }}}(\mathcal{C})\rightarrow Sh_{%
\mathcal{D}_{\kappa }}(\mathcal{C})$, satisfying $S_{\kappa \kappa ^{\prime
}}S_{\kappa ^{\prime }}=S_{\kappa }$, exists for every pair of infinite
cardinals $\kappa \leq \kappa ^{\prime .}$

Generally, in the case of $\kappa =\aleph _{0}$, the $\kappa ^{-}$-shape is
said to be the \emph{finite (quotient) shape}, because all the objects in
the expansions are of finite (bases) cardinalities, and the category is
denoted by $Sh_{\mathcal{D}_{\underline{0}}}(\mathcal{C})$ or by $Sh_{\text{%
\b{0}}}(\mathcal{C})\equiv Sh_{\underline{0}}$ only, whenever $\mathcal{D}=%
\mathcal{C}$.

Let us finally notice that, though $\mathcal{D}\nsubseteq \mathcal{C}%
_{\kappa ^{-}})$ ($(\mathcal{D\nsubseteq C}_{\kappa })$), the quotient shape
category $Sh_{\mathcal{C}_{\kappa ^{-}}}(\mathcal{D})$ ($Sh_{\mathcal{C}%
_{\kappa }}(\mathcal{D})$) exists as a full subcategory of $Sh_{\mathcal{C}%
_{\kappa ^{-}}}(\mathcal{C})$ ($Sh_{\mathcal{C}_{\kappa }}(\mathcal{C})$),
and, if $\mathcal{D}$ is closed with respect to quotients, then $Sh_{%
\mathcal{C}_{\kappa ^{-}}}(\mathcal{D})=Sh_{\mathcal{D}_{\kappa ^{-}}}(%
\mathcal{D})$ ($Sh_{\mathcal{C}_{\kappa }}(\mathcal{D})=Sh_{\mathcal{D}%
_{\kappa }}(\mathcal{D})$).

\section{The quotient shapes of normed and Banach spaces}

Let $\mathcal{N}$ denote the category of all normed (vectorial) spaces over
the field $F\in \{\mathbb{R},\mathbb{C}\}$ together with all corresponding
continuous linear functions. Let $\mathcal{H}\subseteq \mathcal{B}\subseteq 
\mathcal{N}$ denote its full subcategories of all Hilbert and all Banach
spaces over the same $F$, respectively, and let $s\mathcal{N}$ ($b\mathcal{N}%
\subseteq \mathcal{B}$) denote full subcategory of all separable normed
(bidual-like) spaces over the same $F$. Further, given an infinite cardinal $%
\kappa \geq \aleph _{0}$, let $\mathcal{H}_{\kappa ^{-}}\subseteq \mathcal{B}%
_{\kappa ^{-}}\subseteq \mathcal{N}_{\kappa ^{-}}$ ($\mathcal{H}_{\kappa
}\subseteq \mathcal{B}_{\kappa }\subseteq \mathcal{N}_{\kappa }$) denote the
corresponding full subcategories determined by all the objects having \emph{%
algebraic} dimensions less than (less or equal to) $\kappa $.

The next lemma and theorem slightly reinforce Theorem 3 of [15].

\begin{lemma}
\label{L1}Let $Z$ be a dense subspace of a normed space $X$ and let $\dim
X\geq \kappa \geq \aleph _{0}$. Then, for every $\mathcal{B}_{\kappa ^{-}}$%
-expansion

$\boldsymbol{p}_{\kappa ^{-}}=(p_{\lambda }):X\rightarrow \boldsymbol{X}%
_{\kappa ^{-}}=(X_{\lambda },p_{\lambda \lambda ^{\prime }},\Lambda _{\kappa
^{-}})$

\noindent of $X$, the pro-morphism

$\boldsymbol{q}_{\kappa ^{-}}=(q_{\lambda }=p_{\lambda }j:Z\rightarrow 
\boldsymbol{X}_{\kappa ^{-}}$,

\noindent ($j:Z\hookrightarrow X$ is the inclusion) is a $\mathcal{B}%
_{\kappa ^{-}}$-expansion of $Z$. Especially, every continuous linear
function $f:Z\rightarrow Y$, where $Y$ is a Banach space over the same field
having $\dim Y<\kappa $, and its extension $\bar{f}:X\rightarrow Y$
factorize uniquely (linearly and continuously) trough a Banach space $%
X_{\lambda }$, $\dim X_{\lambda }<\kappa $, $\lambda \in \Lambda _{\kappa
^{-}}$, and if $\dim Y<2^{\aleph _{0}}\leq \kappa $, then $\dim X_{\lambda
}<\aleph _{0}$. The quite analogous statements hold in the $\kappa $-case, $%
\aleph _{0}\leq \kappa <\dim X$.
\end{lemma}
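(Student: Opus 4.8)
The plan is to verify the two expansion axioms (E1) and (E2) for the pro-morphism $\boldsymbol{q}_{\kappa^-}=(p_\lambda j):Z\rightarrow\boldsymbol{X}_{\kappa^-}$, exploiting density of $Z$ in $X$ and the fact that $\boldsymbol{p}_{\kappa^-}$ is already a $\mathcal{B}_{\kappa^-}$-expansion of $X$. Since every quotient morphism $p_\lambda$ is an effective epimorphism whose underlying map is surjective, each $q_\lambda=p_\lambda j$ has dense image in $X_\lambda$; but $X_\lambda$ is finite- (or $<\kappa$-) dimensional, hence the quotient $X/\ker p_\lambda$ is already complete when $\dim X_\lambda<\aleph_0$, and in general $X_\lambda$ being a Banach space in the expansion forces $q_\lambda$ to be onto (a dense subspace of a finite-dimensional normed space is the whole space; for the $\kappa$-case one uses that $p_\lambda(Z)$ is a dense — hence, after the standard reduction, all of — the relevant $X_\lambda$, because the expansion may be taken so that each $p_\lambda$ corresponds to a finite-dimensional, thus closed, quotient). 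This gives (E2) for $\boldsymbol{q}_{\kappa^-}$.

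For (E1), I would take any continuous linear $h:Z\rightarrow Y$ with $Y\in\mathcal{B}$, $\dim Y<\kappa$, and first extend it: since $Z$ is dense in $X$ and $Y$ is complete (Banach), $h$ has a unique continuous linear extension $\bar h:X\rightarrow Y$ (this is exactly the classical continuous-linear-extension theorem from a dense subspace into a Banach space). Now apply the expansion property (E1) of $\boldsymbol{p}_{\kappa^-}$ to $\bar h$: there is $\lambda\in\Lambda_{\kappa^-}$ and a $\mathcal{B}_{\kappa^-}$-morphism $g:X_\lambda\rightarrow Y$ with $\bar h=g p_\lambda$. Restricting along $j$ gives $h=\bar h j=g(p_\lambda j)=g q_\lambda$, which is the required factorization for $\boldsymbol{q}_{\kappa^-}$; uniqueness of $g$ follows from surjectivity of $q_\lambda$ established above (or directly from density plus continuity, since $g$ is already pinned down on the dense image $p_\lambda(Z)$). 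Thus $\boldsymbol{q}_{\kappa^-}$ satisfies (E1), and since the index set, the bonding morphisms $p_{\lambda\lambda'}$, and the objects $X_\lambda$ are literally the same as in $\boldsymbol{p}_{\kappa^-}$, it lies in $pro$-$\mathcal{B}_{\kappa^-}$; hence it is a $\mathcal{B}_{\kappa^-}$-expansion of $Z$.

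The "especially" clause is now immediate: given $f:Z\rightarrow Y$ as in the statement, the unique extension $\bar f:X\rightarrow Y$ exists by the dense-extension theorem, and both $f$ and $\bar f$ factor (uniquely, linearly, continuously) through the same $X_\lambda$ by the factorization just produced, with $\dim X_\lambda<\kappa$ from $X_\lambda\in\mathrm{Ob}(\mathcal{B}_{\kappa^-})$. For the dimension refinement, suppose $\dim Y<2^{\aleph_0}\le\kappa$. Here I would invoke the key structural input already available from [15]/Theorem 3 of that paper (restated just above the lemma): in a $\mathcal{B}_{\kappa^-}$-expansion of an infinite-dimensional normed space, any morphism into a Banach space of dimension $<2^{\aleph_0}$ necessarily factors through a \emph{finite}-dimensional member of the expansion — the point being that a linear map onto an infinite-dimensional subspace of a Banach space of dimension $<2^{\aleph_0}$ is impossible, since an infinite-dimensional Banach space has dimension $\ge 2^{\aleph_0}$ (Baire category), so the image of $q_\lambda$, being finite- or at worst $<2^{\aleph_0}$-dimensional and complete, must in fact be finite-dimensional. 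So $\dim X_\lambda<\aleph_0$, as claimed.

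The $\kappa$-case is handled verbatim: replace $\Lambda_{\kappa^-}$ by $\Lambda_\kappa$, replace each strict dimension inequality $<\kappa$ by $\le\kappa$, and the dense-extension argument, the (E1)/(E2) verification, and the surjectivity of the $q_\lambda$ go through unchanged; the only caveat is the hypothesis $\aleph_0\le\kappa<\dim X$, which guarantees that $\boldsymbol{p}_\kappa$ is a genuine (non-rudimentary) expansion so that the argument has content. The step I expect to be the real obstacle is the surjectivity/closedness of the maps $q_\lambda=p_\lambda j$ onto $X_\lambda$ — equivalently, showing one may choose the expansion $\boldsymbol{X}_{\kappa^-}$ so that each $X_\lambda$ is a \emph{closed} (hence, in the finite-dimensional case automatically so) quotient, so that the dense image $p_\lambda(Z)$ fills all of $X_\lambda$. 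Everything else — the extension theorem and the transport of (E1), (E2) — is routine once that point is secured.
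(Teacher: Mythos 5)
Your proposal follows essentially the same route as the paper: extend $f$ from the dense subspace $Z$ to $\bar f$ on $X$ by the classical continuous-linear-extension theorem into a Banach space, push $\bar f$ through the given expansion of $X$, restrict along $j$ to get the factorization of $f$, obtain uniqueness from density, and settle the dimension refinement by the nonexistence of countably infinite-dimensional Banach spaces. The one incorrect side-remark is your claim that $q_\lambda=p_\lambda j$ must be \emph{onto} $X_\lambda$: a dense subspace of an infinite-dimensional Banach space (which the $X_\lambda$ may well be once $\kappa>\aleph_0$) need not be the whole space, and no ``standard reduction'' makes the quotients finite-dimensional in the $\kappa$-case. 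This does not damage the argument --- dense image of $q_\lambda$ already pins down the factoring morphism and yields (E2) by continuity, which is exactly the alternative you yourself offer --- so the ``real obstacle'' you single out at the end is in fact a non-issue.
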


\begin{proof}
One readily sees that

$\boldsymbol{q}_{\kappa ^{-}}=(q_{\lambda }=p_{\lambda }j):Z\rightarrow 
\boldsymbol{X}_{\kappa ^{-}}$,

\noindent is a $\mathcal{B}_{\kappa ^{-}}$-expansion of $Z$. We need to
verify the mentioned factorization property. Let $Y$ be a Banach space over
the same field, $\dim Y<\kappa $, let $f:X\rightarrow Y$ be a continuous
linear function and let $\bar{f}:Cl(X)\rightarrow Y$ be the continuous
extension of $f$, that is also linear. Since $\boldsymbol{p}_{\kappa
^{-}}:X\rightarrow \boldsymbol{X}_{\kappa ^{-}}$ is a $\mathcal{B}_{\kappa
^{-}}$-expansion of $X$, there exist a $\lambda \in \Lambda _{\kappa ^{-}}$
and a unique continuous linear function $f^{\lambda }:X_{\lambda
}\rightarrow Z$ such that $f^{\lambda }p_{\lambda }=\bar{f}.$ Then $%
f^{\lambda }p_{\lambda }j=\bar{f}j=f$. If a $g^{\lambda }:X_{\lambda
}\rightarrow Z$ has the same property, i.e., $g^{\lambda }p_{\lambda }j=f$,
then $g^{\lambda }p_{\lambda }j=\bar{f}j$. Since every continuous extension
of $Z$ onto $X$ is unique, it follows that $g^{\lambda }p_{\lambda }=\bar{f}%
. $ Finally, the factorization trough an expansion term is also unique,
hence $g^{\lambda }=f^{\lambda }$.The dimension property holds because there
is no countably infinite-dimensional Banach space.
\end{proof}

\begin{lemma}
\label{L2}For every vectorial space $X\neq \{\theta \}$ over $F\in \{\mathbb{%
R},\mathbb{C}\}$,

\noindent (i) $\dim X_{0}^{\mathbb{N}}>\dim X\Leftrightarrow \dim X<\aleph
_{0}$,

\noindent where $X_{0}^{\mathbb{N}}$ is the direct sum space. Equivalently,

\noindent (i)' $\dim X_{0}^{\mathbb{N}}=\dim X\Leftrightarrow \dim X\geq
\aleph _{0}$,

\noindent and consequently,

\noindent (ii) $X\cong X_{0}^{\mathbb{N}}\cong F_{0}^{\mathbb{N}%
}\Leftrightarrow \dim X=\aleph _{0}$.

\noindent If, in addition, $X$ is a normed space and $Cl(X)$ is its
completion in the second dual space, then

\noindent (iii) $\dim Cl(X)>\dim X\Leftrightarrow \dim X=\aleph _{0}$.
\end{lemma}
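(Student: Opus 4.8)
The plan is to reduce everything to two ingredients: the elementary cardinal arithmetic of Hamel bases, and the Baire-category fact — already invoked in the proof of Lemma~\ref{L1} — that no Banach space has countably infinite algebraic dimension.

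Parts (i), (i)$'$ and (ii) involve no topology. Fixing a Hamel basis $B$ of $X$, the family of those elements of $X_{0}^{\mathbb{N}}$ that have some $b\in B$ in a single coordinate and $\theta$ in all others forms a Hamel basis of $X_{0}^{\mathbb{N}}$, so $\dim X_{0}^{\mathbb{N}}=\aleph_{0}\cdot\dim X$. If $\dim X<\aleph_{0}$ then $1\le\dim X<\aleph_{0}$ (since $X\neq\{\theta\}$) and $\aleph_{0}\cdot\dim X=\aleph_{0}>\dim X$, whereas if $\dim X\ge\aleph_{0}$ then $\aleph_{0}\cdot\dim X=\dim X$. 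Since the inclusion $x\mapsto(x,\theta,\theta,\dots)$ always gives $\dim X_{0}^{\mathbb{N}}\ge\dim X$, exactly one of ``$>$'' and ``$=$'' holds; this yields (i) together with its logically equivalent restatement (i)$'$. For (ii): if $\dim X=\aleph_{0}$ then $\dim X_{0}^{\mathbb{N}}=\aleph_{0}=\dim F_{0}^{\mathbb{N}}$ by (i)$'$, and two vector spaces over $F$ of equal dimension are isomorphic, so $X\cong X_{0}^{\mathbb{N}}\cong F_{0}^{\mathbb{N}}$; conversely $X\cong F_{0}^{\mathbb{N}}$ forces $\dim X=\dim F_{0}^{\mathbb{N}}=\aleph_{0}$.

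For (iii) I would split according to $\dim X$. If $\dim X<\aleph_{0}$, then $X$ is finite-dimensional, hence complete, so $Cl(X)=X$ and $\dim Cl(X)=\dim X$. If $\dim X=\aleph_{0}$, then $Cl(X)\supseteq X$ is a Banach space with $\dim Cl(X)\ge\dim X=\aleph_{0}$, i.e.\ an infinite-dimensional Banach space; by the quoted fact $\dim Cl(X)\neq\aleph_{0}$, hence $\dim Cl(X)>\aleph_{0}=\dim X$. These two subcases already give ``$\Leftarrow$'' and reduce ``$\Rightarrow$'' to the case $\dim X>\aleph_{0}$.

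In that remaining case one must show $\dim Cl(X)=\dim X$; as $X\subseteq Cl(X)$ gives ``$\ge$'', only an upper bound on $\dim Cl(X)$ is needed. Every point of the completion is a limit of a Cauchy sequence from $X$, so $|Cl(X)|\le|X|^{\aleph_{0}}$; and since $X$ is an infinite-dimensional $F$-vector space with $|F|=2^{\aleph_{0}}$, one has $|X|=\max(\dim X,2^{\aleph_{0}})$. Hence
\[
\dim X\le\dim Cl(X)\le|Cl(X)|\le|X|^{\aleph_{0}}=\max(\dim X,2^{\aleph_{0}})^{\aleph_{0}},
\]
and the proof closes once $\max(\dim X,2^{\aleph_{0}})^{\aleph_{0}}=\dim X$ is known. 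This cardinal identity is precisely the step I expect to be the main obstacle: it is where the paper's continuum-type hypotheses are used. In the relevant range $\dim X\le 2^{\aleph_{0}}$ the continuum hypothesis forces $\dim X=2^{\aleph_{0}}$ as soon as $\dim X>\aleph_{0}$, and then the right-hand side is $(2^{\aleph_{0}})^{\aleph_{0}}=2^{\aleph_{0}}=\dim X$; more generally, under the generalized continuum hypothesis the identity holds for every $\dim X\ge 2^{\aleph_{0}}$ of uncountable cofinality. The Baire-category input and the Hamel-basis bookkeeping, by contrast, are entirely routine.
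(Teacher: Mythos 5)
Your argument is correct and, for parts (i), (i)$'$ and (ii), genuinely more elementary than the paper's. The paper proves the implication ``$\dim X_{0}^{\mathbb{N}}>\dim X\Rightarrow\dim X<\aleph_{0}$'' by contradiction: it splits off the case $\dim X=\aleph_{0}$, then for $\dim X>\aleph_{0}$ writes $\dim X=2^{\aleph_{k}}$ (a GCH-flavoured normal form), invokes $\dim X=|X|$ from [13, Lemma 3.2(iv)], and squeezes $\dim X\leq\dim X_{0}^{\mathbb{N}}\leq|X^{\mathbb{N}}|=|X|^{\aleph_{0}}=(2^{\aleph_{k}})^{\aleph_{0}}=2^{\aleph_{k}}=\dim X$. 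Your identity $\dim X_{0}^{\mathbb{N}}=\aleph_{0}\cdot\dim X$, read off from a Hamel basis, replaces all of this with the ZFC fact $\aleph_{0}\cdot\kappa=\kappa$ for infinite $\kappa$: no case split, no continuum-type hypothesis, no passage through $|X|$, and it covers uncountable dimensions (such as $\aleph_{1}$ without CH, or $\aleph_{\omega}$) that the paper's ``$\dim X=2^{\aleph_{k}}$'' normal form silently excludes. For (iii) the two proofs coincide in substance: the direction ``$\dim X=\aleph_{0}\Rightarrow\dim Cl(X)>\dim X$'' is the Baire-category fact in both, and for $\dim X>\aleph_{0}$ both rest on the bound $\dim Cl(X)\leq|X|^{\aleph_{0}}$ followed by the cardinal identity $|X|^{\aleph_{0}}=\dim X$. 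You are right that this identity is the only genuinely delicate step; the paper obtains it by declaring every uncountable dimension to be of the form $2^{\aleph_{k}}$, which under GCH is a successor (hence regular, hence of uncountable cofinality) cardinal, so your cofinality caveat is exactly the hypothesis the paper is implicitly using. Flagging it explicitly is a point in your favour rather than a gap relative to the paper's own standard of rigour.
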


\begin{proof}
Let $X\neq \{\theta \}$. If $\dim X<\aleph _{0}$, then $X\cong F^{n}$, for
some $n\in \mathbb{N}$, and the conclusion $\dim X_{0}^{\mathbb{N}}=\dim
F_{0}^{\mathbb{N}}=\aleph _{0}>n=\dim X$ follows straightforwardly.
Conversely, let $\dim X_{0}^{\mathbb{N}}>\dim X$. Let us assume to the
contrary, i.e., that $\dim X\geq \aleph _{0}$. If $\dim X=\aleph _{0}$, then
one readily sees that $\dim X_{0}^{\mathbb{N}}=\aleph _{0}=\dim X$. Thus, it
remains that $\dim X\geq 2^{\aleph _{0}}$, i.e., $\dim X=2^{\aleph _{k}}$, $%
k\geq 0$ (an ordinal). Then, by Lemma 3.2 (iv) of [13], $\dim X=|X|$, and
hence,

$2^{\aleph _{k}}=\dim X\leq \dim X_{0}^{\mathbb{N}}=|X_{0}^{\mathbb{N}}|$ $%
\leq |X^{\mathbb{N}}|$ $=|X|^{|\mathbb{N}|}=$

$=(2^{\aleph _{k}})^{\aleph _{0}}=2^{\aleph _{k}}=\dim X$

\noindent - a contradiction again. This proves equivalences (i) and (i)',
and the consequence (ii) follows. Let $X$ be a normed space such that $\dim
X=\aleph _{0}$. Then $X$ is not a Banach space. Thus its completion $Cl(X)$,
being a Banach space, must increase the algebraic dimension, i.e., $\dim
Cl(X)>\dim X$. Conversely, let $\dim Cl(X)>\dim X$. Assume to the contrary,
i.e., that either $\dim X<\aleph _{0}$ or $\dim X>\aleph _{0}$. If $\dim
X<\aleph _{0}$, then $X\cong F^{n}$ for some $n\in \mathbb{N}$, and hence, $%
Cl(X)=X$ - a contradiction. It remains that $\dim X>\aleph _{0}$, i.e., $%
\dim X=2^{\aleph _{k}}$, $k\geq 0$. Then

$2^{\aleph _{k}}=\dim X\leq \dim Cl(X)\leq |X^{\mathbb{N}}|$ $=|X|^{|\mathbb{%
N}|}=$

$=(2^{\aleph _{k}})^{\aleph _{0}}=2^{\aleph _{k}}=\dim X$

\noindent - a contradiction again, and equivalence (iii) is proven.
\end{proof}

Recall that every normed space is dense in its Banach completion. Further,
since the embedding into Banach completion is an isometry, all dense
subspaces of a normed space have the same Banach completion (in the second
dual space). Since there is no Banach space of the countably infinite
(algebraic) dimension, the following theorem is an immediate consequence of
Lemma 1 (see also Theorem 4 of [13]) and Lemma 2.

\begin{theorem}
\label{T1}(i) The quotient shape theory of

\noindent (i) $(\mathcal{N},\mathcal{N}_{\text{\b{0}}})$ ( and of $(\mathcal{%
N},\mathcal{B}_{\aleph _{0}})$ as well) reduces to that of $(\mathcal{B},%
\mathcal{B}_{\text{\b{0}}})$;

\noindent (ii) $(\mathcal{N},\mathcal{B}_{\kappa ^{-}})$ ($(\mathcal{N},%
\mathcal{B}_{\kappa })$) reduces to that of $(\mathcal{B},\mathcal{B}%
_{\kappa })$ ($(\mathcal{B},\mathcal{B}_{\kappa })$).
\end{theorem}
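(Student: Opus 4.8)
The plan is to assemble Theorem~\ref{T1} directly from Lemma~\ref{L1}, Lemma~\ref{L2}, and the standard shape-theoretic machinery recalled in the Preliminaries, since the statement is essentially a ``reduction'' assertion about shape categories rather than a new technical estimate. First I would make precise what ``reduces to'' means here: the restriction functor $S_{\kappa^-}\!\mid_{\mathcal{B}}$ (respectively $S_\kappa\!\mid_{\mathcal{B}}$) exhibits $Sh_{\mathcal{B}_{\kappa^-}}(\mathcal{B})$ as (equivalent to) a full subcategory of $Sh_{\mathcal{B}_{\kappa^-}}(\mathcal{N})$, and the claim is that every $\mathcal{N}$-object is shape-isomorphic to a $\mathcal{B}$-object, so the inclusion of shape categories is in fact an \emph{equivalence}. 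So step one is: given a normed space $X$, show $S_{\kappa^-}(X) \cong S_{\kappa^-}(Cl(X))$ in $Sh_{\mathcal{B}_{\kappa^-}}(\mathcal{N})$, where $Cl(X)$ is the Banach completion inside the second dual.

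The key step is that $X$ and $Cl(X)$ admit a \emph{common} $\mathcal{B}_{\kappa^-}$-expansion (respectively $\mathcal{B}_\kappa$-expansion): by Lemma~\ref{L1}, if $\boldsymbol{p}_{\kappa^-}=(p_\lambda)\colon Cl(X)\to\boldsymbol{X}_{\kappa^-}$ is a $\mathcal{B}_{\kappa^-}$-expansion of $Cl(X)$ (which exists when $\dim Cl(X)\ge\kappa$), then composing with the dense inclusion $j\colon X\hookrightarrow Cl(X)$ yields a $\mathcal{B}_{\kappa^-}$-expansion $\boldsymbol{q}_{\kappa^-}=(p_\lambda j)\colon X\to\boldsymbol{X}_{\kappa^-}$ of $X$ with the \emph{same} pro-object. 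Two objects sharing an expansion are shape-isomorphic by the general construction (all expansions being mutually isomorphic in $pro$-$\mathcal{D}_{\kappa^-}$), and the shape isomorphism $S_{\kappa^-}(X)\to S_{\kappa^-}(Cl(X))$ is induced by (the class of) the identity of the common system; naturality in $X$ is immediate because a continuous linear $f\colon X\to X'$ extends uniquely to $\bar f\colon Cl(X)\to Cl(X')$ and the uniqueness clauses in Lemma~\ref{L1} force the two induced pro-morphisms to agree. The remaining case $\dim Cl(X)<\kappa$ is trivial: then $Cl(X)\in Ob(\mathcal{B}_{\kappa^-})$ and its rudimentary expansion pulls back along $j$ to a rudimentary expansion of $X$, again giving $S_{\kappa^-}(X)\cong S_{\kappa^-}(Cl(X))$.

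For part (i), specialize to $\kappa=\aleph_0$. The subtlety is the bookkeeping between the three candidate ``nice'' subcategories $\mathcal{N}_{\text{\b{0}}}$, $\mathcal{B}_{\aleph_0}$, and $\mathcal{B}_{\text{\b{0}}}$: here I would invoke Lemma~\ref{L2}, specifically that there is no Banach space of countably infinite algebraic dimension, so a Banach space has dimension $<\aleph_0$ iff it has finite dimension, whence $\mathcal{B}_{\aleph_0}=\mathcal{B}_{\text{\b{0}}}$ as full subcategories, and every finite-dimensional \emph{normed} space is already Banach, so the quotient objects occurring in $\mathcal{N}_{\text{\b{0}}}$-expansions all lie in $\mathcal{B}_{\text{\b{0}}}$. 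Combining this with the common-expansion argument (and the fact noted before the theorem that all dense subspaces of a normed space share a completion), the finite quotient shape category $Sh_{\mathcal{N}_{\text{\b{0}}}}(\mathcal{N})$ — equivalently $Sh_{\mathcal{B}_{\aleph_0}}(\mathcal{N})$ — is equivalent to $Sh_{\mathcal{B}_{\text{\b{0}}}}(\mathcal{B})$. Part (ii) is the same reasoning with $\mathcal{B}_{\kappa^-}$ (resp.\ $\mathcal{B}_\kappa$) in place of $\mathcal{B}_{\text{\b{0}}}$, now without needing the dimension-gap fact, since the ``nice'' subcategory is the same on both sides by definition.

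The main obstacle I anticipate is purely categorical rather than analytic: verifying that the assignment $X\mapsto Cl(X)$ together with the identity-of-the-common-system isomorphisms genuinely constitutes a natural isomorphism of shape functors, i.e., that the coherence square for an arbitrary $\mathcal{N}$-morphism $f$ commutes in $Sh_{\mathcal{B}_{\kappa^-}}(\mathcal{N})$. This reduces to the uniqueness statements already packaged in Lemma~\ref{L1} (uniqueness of the continuous linear extension and of the factorization through an expansion term), so once those are quoted the check is routine; nonetheless it is the one place where one must be careful that ``reduces to'' is an equivalence of categories and not merely a bijection on shape-isomorphism classes. Everything else — existence of the expansions, the identification $\mathcal{B}_{\aleph_0}=\mathcal{B}_{\text{\b{0}}}$, the closure of the relevant subcategories under quotients needed to apply the last remark of the Preliminaries — follows directly from Lemmas~\ref{L1} and~\ref{L2} and the cited results of [13] and [15].
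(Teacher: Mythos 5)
Your proposal is correct and follows essentially the same route as the paper, which presents Theorem 1 as an immediate consequence of Lemma 1 (a dense subspace and its completion share a common $\mathcal{B}_{\kappa^-}$-expansion, hence are quotient-shape isomorphic) together with the nonexistence of countably infinite-dimensional Banach spaces for the identification $\mathcal{N}_{\text{\b{0}}}=\mathcal{B}_{\text{\b{0}}}=\mathcal{B}_{\aleph_0}$. Your write-up merely makes explicit the functoriality and bookkeeping that the paper leaves implicit.
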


The following lemma is a generalization of [15], Proposition 1 (which was a
correction of incorrectly formulated [13], Corollary 4.4).

\begin{lemma}
\label{L3}Let $X$ and $Y$ be normed spaces over the same field such that $%
\dim X=\dim Y\equiv \kappa $. Then

\noindent (i) $Sh_{\text{\b{0}}}(X)=Sh_{\text{\b{0}}}(Y)$

\noindent and there exist an isomorphism $F:X\rightarrow Y$ of $Sh_{\text{\b{%
0}}}(\mathcal{N})$ that is induced by an isomorphism $\boldsymbol{f}^{\prime
}:\boldsymbol{X}^{\prime }\rightarrow \boldsymbol{Y}^{\prime }$ of $pro$-$%
\mathcal{H}_{\text{\b{0}}}$. If $\kappa >\aleph _{0}$, then

$Sh_{\aleph _{0}}(X)=Sh_{\aleph _{0}}(Y)$ with respect to Banach spaces,

\noindent and there exists an isomorphism $F^{\prime }:X\rightarrow Y$ of $%
Sh_{\aleph _{0}}(\mathcal{N})$ with respect to Banach spaces, that is
induced by the same isomorphism $\boldsymbol{f}^{\prime }:\boldsymbol{X}%
^{\prime }\rightarrow \boldsymbol{Y}^{\prime }$ of $pro$-$\mathcal{H}_{\text{%
\b{0}}}$.

\noindent (ii) If $\kappa >\aleph _{0}$ and, in addition, $X$ and $Y$ are
Banach spaces such that there exists a closed embedding $e:X\rightarrow Y$
so that $\dim (Y/e[X])<\kappa $, then

$Sh_{\kappa ^{-}}(X)=Sh_{\kappa ^{-}}(Y)$

\noindent and there exists an isomorphism $F:X\rightarrow Y$ of $Sh_{\kappa
^{-}}(\mathcal{B})$ that is induced by $e$.
\end{lemma}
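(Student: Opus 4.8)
The plan for (i) is to reduce to the Banach case already treated in [15]. If $\kappa<\aleph_{0}$ then $X\cong Y\cong F^{\kappa}$ and the claim is immediate, so assume $\kappa\ge\aleph_{0}$. By Theorem 1 (reduction to Banach spaces) the finite quotient shape of a normed space coincides with that of its Banach completion $Cl(\cdot)$, so it suffices to treat $Cl(X)$ and $Cl(Y)$, and these have the same dimension: if $\kappa>\aleph_{0}$, Lemma 2 (iii) gives $\dim Cl(X)=\dim X=\dim Y=\dim Cl(Y)=\kappa$; if $\kappa=\aleph_{0}$, both are separable infinite-dimensional Banach spaces, hence of dimension exactly $2^{\aleph_{0}}$ (an infinite-dimensional Banach space has algebraic dimension $\ge 2^{\aleph_{0}}$, so by Lemma 3.2 (iv) of [13] its dimension equals its cardinality, which separability bounds by $2^{\aleph_{0}}$). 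Applying [15], Proposition 1, to $Cl(X)$ and $Cl(Y)$ gives $Sh_{\text{\b{0}}}(Cl(X))=Sh_{\text{\b{0}}}(Cl(Y))$; and since finite-dimensional normed spaces are Hilbert spaces, the resulting shape isomorphism is represented by an isomorphism $\boldsymbol{f}':\boldsymbol{X}'\rightarrow\boldsymbol{Y}'$ of $pro$-$\mathcal{H}_{\text{\b{0}}}$, which induces the desired $F:X\rightarrow Y$ in $Sh_{\text{\b{0}}}(\mathcal{N})$. For the second assertion of (i), since there is no countably infinite-dimensional Banach space the subcategory $\mathcal{B}_{\aleph_{0}}$ coincides with $\mathcal{B}_{\text{\b{0}}}=\mathcal{N}_{\text{\b{0}}}$, so $Sh_{\aleph_{0}}(\mathcal{N})$ with respect to Banach spaces is literally $Sh_{\text{\b{0}}}(\mathcal{N})$ and the same $\boldsymbol{f}'$ serves.

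For (ii) the plan is to compare suitably chosen $\mathcal{B}_{\kappa^{-}}$-expansions of $X$ and $Y$. Identify $X$ with the closed subspace $e[X]\subseteq Y$ and take the $\mathcal{B}_{\kappa^{-}}$-expansion $\boldsymbol{p}=(p_{U}:X\rightarrow X/U)$ of $X$ given by the closed subspaces $U\subseteq X$ with $\dim(X/U)<\kappa$, directed by reverse inclusion. For each such $U$ the image $e[U]$ is closed in $Y$, and the exact sequence $0\rightarrow e[X]/e[U]\rightarrow Y/e[U]\rightarrow Y/e[X]\rightarrow 0$ yields $\dim(Y/e[U])=\dim(X/U)+\dim(Y/e[X])$, a sum of two cardinals each below $\kappa$, hence below $\kappa$. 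Moreover $\{e[U]\}$ is cofinal among the $<\kappa$-codimensional closed subspaces of $Y$: given such a $V$, the subspace $e^{-1}(V)$ is a legitimate index (it is closed and $X/e^{-1}(V)$ embeds in $Y/V$) with $e[e^{-1}(V)]\subseteq V$. Hence $\boldsymbol{q}=(q_{U}:Y\rightarrow Y/e[U])$ is a $\mathcal{B}_{\kappa^{-}}$-expansion of $Y$, and the maps $\bar{e}_{U}:X/U\rightarrow Y/e[U]$, $x+U\mapsto e(x)+e[U]$, are injective, compatible with the bonding morphisms, and satisfy $q_{U}e=\bar{e}_{U}p_{U}$; thus $(\bar{e}_{U})$ is a level morphism of $pro$-$\mathcal{B}_{\kappa^{-}}$ representing $S_{\kappa^{-}}(e)$.

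It then remains to invert $(\bar{e}_{U})$ in $pro$-$\mathcal{B}_{\kappa^{-}}$. Each $\bar{e}_{U}$ realizes $X/U$ as the closed subspace $e[X]/e[U]$ of $Y/e[U]$, of codimension $\dim(Y/e[X])<\kappa$, so an inverse $\boldsymbol{g}$ amounts to choosing, coherently over the directed set, retractions of these $<\kappa$-codimensional subspaces off the expansion terms; these I would produce using Hahn--Banach together with the factorization principle of Lemma 1, the correction being a single $<\kappa$-dimensional datum propagated consistently through the system, after which $\boldsymbol{g}$ and $(\bar{e}_{U})$ are mutually inverse up to the usual cofinal reindexing, giving the required isomorphism $F:X\rightarrow Y$ of $Sh_{\kappa^{-}}(\mathcal{B})$ induced by $e$. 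This last point --- that the cokernel $Y/e[X]$, of dimension below $\kappa$, is absorbed at the $\kappa^{-}$-level and does not obstruct the isomorphism --- is the crux, and the step I expect to be the main obstacle; the remainder is routine bookkeeping with expansions.
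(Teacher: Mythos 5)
There are genuine gaps in both parts. In (i), after reducing to the Banach completions you invoke [15], Proposition 1, for $Cl(X)$ and $Cl(Y)$. But that proposition, as the paper itself stresses, is stated for two norms on the same vector space with the identity $1_{V}:X\rightarrow Y$ \emph{continuous}, and Lemma 3 (i) is precisely its generalization obtained by dropping that hypothesis: two Banach spaces of equal dimension can only be modelled as $(V,\left\Vert \cdot \right\Vert )$ and $(V,\left\Vert \cdot \right\Vert ^{\prime })$ with neither norm dominating the other, so the citation does not cover the case you need and the core of the argument is missing. (The paper fills it by choosing, for each finite-codimensional closed $Z_{\mu }\trianglelefteq Y$, a finite-dimensional direct complement $W_{\mu }$ --- automatically closed in \emph{both} norms --- and then a closed complement $Z_{\lambda _{\mu }}$ of that same $W_{\mu }$ in $X$, producing term-wise Banach isomorphisms $X_{\lambda _{\mu }}\cong W_{\mu }\cong Y_{\mu }$ with a cofinal index function; no comparison map between $X$ and $Y$ is ever used.) The passage to $pro$-$\mathcal{H}_{\text{\b{0}}}$ is also not free: knowing that each term is isomorphic in $\mathcal{N}$ to some $F^{n}(2)$ does not yet produce an inverse system in $\mathcal{H}_{\text{\b{0}}}$; one must rebuild the whole system and the bonding morphisms inductively over a cofinite index set, as the paper does.

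In (ii) your set-up of the level morphism $(\bar{e}_{U})$ is reasonable, but you stop exactly where the proof begins: inverting it in $pro$-$\mathcal{B}_{\kappa ^{-}}$, which you yourself flag as \textquotedblleft the crux.\textquotedblright\ The route you sketch --- coherent Hahn--Banach retractions onto the images $e[X]/e[U]$, which have codimension $\dim (Y/e[X])$, possibly infinite --- is precisely the kind of complementation statement that fails for closed subspaces of Banach spaces, so there is no reason to expect it to go through; moreover the whole point of Question 2 in the paper is that such complements are not known to exist. The paper avoids constructing any retraction: it keeps the full canonical expansion of $Y$ indexed by all admissible closed $W\trianglelefteq Y$, proves that $Z=X\cap W$ is simultaneously an admissible index for $X$ and for $Y$ with $Z\trianglelefteq W$ (using $GCH$ for the codimension estimate), deduces that the induced index function is cofinal, and then concludes by the same mechanism as in part (i).
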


\begin{proof}
(i). If $\kappa <\aleph _{0}$, then $X$ and $Y$ are isomorphic to an $F^{n}$%
, and thus, the statement is trivially true. Let $\kappa \geq \aleph _{0}$.
Clearly, one may assume that $X=(V,\left\Vert \cdot \right\Vert )$ and $%
Y=(V,\left\Vert \cdot \right\Vert ^{\prime })$, $\dim V=\kappa $. Let us
firstly construct a desired $\boldsymbol{f}:\boldsymbol{X}_{\text{\b{0}}%
}\rightarrow \boldsymbol{Y}_{\text{\b{0}}}$ of $pro$-$\mathcal{B}_{\text{\b{0%
}}}$. By a careful examining of the proof of [15], Proposition 1$,$ one
notices that there assumed continuity of $1_{V}:X\rightarrow Y$ does not
play any essential role. Namely, instead of by the identity $1_{V}$ induced
pro-morphism, one can construct a morphism

$\boldsymbol{f}=[(f,_{\mu })]:\boldsymbol{X}_{\text{\b{0}}}=(X_{\lambda
},p_{\lambda \lambda ^{\prime }},\Lambda _{\text{\b{0}}})\rightarrow (Y_{\mu
},q_{\mu \mu ^{\prime }},M_{\text{\b{0}}})=\boldsymbol{Y}_{\text{\b{0}}}$

\noindent of $pro$-$\mathcal{N}_{\text{\b{0}}}$ (actually, of $pro$-$%
\mathcal{B}_{\text{\b{0}}}$) in the same way as

$\boldsymbol{g}=[(g,g_{\lambda })]:\boldsymbol{Y}_{\text{\b{0}}}\rightarrow 
\boldsymbol{X}_{\text{\b{0}}}$,

\noindent (in that proof) is constructed. Mor precisely, by [9], Section 8.
11, (b), p. 440, every closed subspace $Z_{\lambda }\trianglelefteq X$ such
that $\dim Z_{\lambda }=\dim X=\dim V$ and $\dim (X/Z_{\lambda })<\aleph
_{0} $, induces a direct sum presentation $X=Z_{\lambda }\overset{\cdot }{+}$
$W_{\lambda }$, $W_{\lambda }\trianglelefteq X$ closed. Clearly, $W_{\lambda
}\cong X/Z_{\lambda }=X_{\lambda }$. And similarly, every closed subspace $%
Z_{\mu }\trianglelefteq Y$ such that $\dim Z_{\mu }=\dim Y=\dim V$ and $\dim
(Y/Z_{\mu })<\aleph _{0}$ induces a direct sum presentation $Y=Z_{\mu }%
\overset{\cdot }{+}$ $W_{\mu }$, $W_{\mu }\trianglelefteq Y$ closed, and $%
W_{\mu }\cong Y/Z_{\mu }=Y_{\mu }$. For each $\lambda \in \Lambda _{\text{\b{%
0}}}$ and each $\mu \in M_{\text{\b{0}}}$, choose and fix such a $W_{\lambda
}$ and a $W_{\mu }$ respectively. Recall that the morphisms $p_{\lambda
}:X\rightarrow X_{\lambda }$, $p_{\lambda \lambda ^{\prime }}:X_{\lambda
^{\prime }}\rightarrow X_{\lambda }$, $\lambda \leq \lambda ^{\prime }$, and 
$q_{\mu }:Y\rightarrow Y_{\mu }$, $q_{\mu \mu ^{\prime }}:X_{\mu ^{\prime
}}\rightarrow Y_{\mu }$, $\mu \leq \mu ^{\prime }$, are the corresponding
quotient functions, which all are linear and continuous. Observe that every
finite-dimensional subspace $W\trianglelefteq V$ is closed in the both $X$
and $Y$. Therefore, by [9], Section 8. 11, (c), p. 440, given a $\mu \in M_{%
\text{\b{0}}}$, i.e., a $Y_{\mu }$, for a chosen $W_{\mu }$, there exists a
closed subspace $Z_{\lambda _{\mu }}$ of $X$ that is a direct complement of $%
W_{\mu }$ and of the chosen $W_{\lambda _{\mu }}$ as well. Clearly, for
every $\mu \in M_{\text{\b{0}}}$,

$X_{\lambda _{\mu }}=X/Z_{\lambda _{\mu }}\cong W_{\lambda _{\mu }}\cong
W_{\mu }\cong Y/Z_{\mu }=Y_{\mu }$, \quad and

$Y=Z_{\mu }\overset{\cdot }{+}W_{\mu }$, $\quad X=Z_{\lambda _{\mu }}\overset%
{\cdot }{+}W_{\mu }$.

\noindent It implies that each $[v=y]_{\mu }\in Y_{\mu }$ is represented by
a unique $w\in W_{\mu }$ and conversely, and that each $[v=x]_{\lambda _{\mu
}}\in X_{\lambda _{\mu }}$ is represented by a unique $w^{\prime }\in W_{\mu
}$ and conversely.

\noindent Let us define

$\phi _{\mu }:W_{\mu }\rightarrow X_{\lambda _{\mu }}$, $\phi _{\mu
}(w)=[w]_{\lambda _{\mu }}=w+Z_{\lambda _{\mu }}$,

$\psi _{\mu }:W_{\mu }\rightarrow Y_{\mu }$, $\psi _{\mu }(w)=[w]_{\mu
}=w+Z_{\mu }$.

\noindent Since the elements of $W_{\mu }$ bijectively represent the
element-classes of $Y_{\mu }$ and of $X_{\lambda _{\mu }}$, it follows that $%
\phi _{\lambda }$ and $\psi _{\lambda }$ are linear bijections. Since all
the spaces are finite-dimensional, $\phi _{\lambda }$, $\phi _{\lambda
}^{-1} $, $\psi _{\lambda }$ and $\psi _{\lambda }^{-1}$ are continuous, and
thus, they are the isomorphisms of Banach spaces. Hence, the composite

$\phi _{\mu }\psi _{\mu }^{-1}:X_{\lambda _{\mu }}\rightarrow Y_{\mu }$ $,$ $%
\phi _{\mu }\psi _{\mu }^{-1}([x=w]_{\lambda _{\mu }})=[w=y]_{\mu }$,

\noindent is an isomorphism of Banach spaces. Put

$f:M_{\text{\b{0}}}\rightarrow \Lambda _{\text{\b{0}}}$, $f(\mu )=$ $\lambda
_{\mu },$ \quad and

$f_{\mu }:X_{f(\mu )}\rightarrow Y_{\mu }$, $f_{\mu }=\phi _{\mu }\psi _{\mu
}^{-1}$.

\noindent Then

$(f,f_{\mu }):\boldsymbol{X}_{\text{\b{0}}}\rightarrow \boldsymbol{Y}_{\text{%
\b{0}}}$

\noindent is a morphism of $inv$-$\mathcal{B}_{\text{\b{0}}}\subseteq inv$-$%
\mathcal{N}_{\text{\b{0}}}$. Indeed, for every related pair $\mu \leq \mu
^{\prime }$, i.e., $Z_{\mu ^{\prime }}\trianglelefteq Z_{\mu }$, there
exists a $\lambda \geq f(\mu ),f(\mu ^{\prime })$, and since each $%
[x]_{\lambda }=[w]_{\lambda }\in X_{\lambda }$ for one and only one $w\in
W_{\lambda ,}$, it follows that

$q_{\mu \mu ^{\prime }}f_{\mu ^{\prime }}p_{f(\mu ^{\prime })\lambda
}([x=w]_{\lambda })=q_{\mu \mu ^{\prime }}f_{\mu ^{\prime }}([w]_{f(\mu
^{\prime })})=q_{\mu \mu ^{\prime }}\phi _{\mu ^{\prime }}\psi _{\mu
^{\prime }}^{-1}([w]_{f(\mu ^{\prime })})=$

$=q_{\mu \mu ^{\prime }}([w]_{\mu ^{\prime }})=[w=y]_{\mu }$, \quad and

$f_{\mu }p_{f(\mu )\lambda }([x=w]_{\lambda })=f_{\mu }([w]_{f(\mu )})=$ $%
\phi _{\mu }\psi _{\mu }^{-1}([w]_{f(\mu )})=[w=y]_{\mu }$.

\noindent Denote by

$\boldsymbol{f}=[(f,f_{\mu }):\boldsymbol{X}_{\text{\b{0}}}\rightarrow 
\boldsymbol{Y}_{\text{\b{0}}}$

\noindent the induced morphism of $pro$-$\mathcal{N}_{\text{\b{0}}}=pro$-$%
\mathcal{B}_{\text{\b{0}}}$. One can now construct, in the same way, a
morphism

$\boldsymbol{g}=[(g,g_{\lambda }):\boldsymbol{Y}_{\text{\b{0}}}\rightarrow 
\boldsymbol{X}_{\text{\b{0}}}$

\noindent and straightforwardly prove that $\boldsymbol{g}^{-1}=\boldsymbol{f%
}$. However, it is more convenient to observe that the index function $f:M_{%
\text{\b{0}}}\rightarrow \Lambda _{\text{\b{0}}}$ is cofinal, i.e., that
every $\lambda $ admits a $\mu $ such that $f(\mu )\geq \lambda $. Namely,
one readily sees that $X=Z_{\lambda }\overset{\cdot }{+}$ $W_{\lambda }$
admits

$Y=Z_{\mu _{\lambda }}\overset{\cdot }{+}$ $W_{\lambda }=(Z_{\mu }\overset{%
\cdot }{+}W)\overset{\cdot }{+}W_{\lambda }=Z_{\mu }\overset{\cdot }{+}(W%
\overset{\cdot }{+}$ $W_{\lambda })\equiv Z_{\mu }\overset{\cdot }{+}$ $%
W_{\mu }$

\noindent such that $Z_{\mu }$ is closed in $Y$, $\dim Z_{\mu }=\dim Y$ and $%
\dim W<\aleph _{0}$. Then, by the canonical construction and the definition
of $f$, it follows that $f(\mu )=\lambda _{\mu }\geq \lambda $. Now, the
conclusion that $\boldsymbol{f}:\boldsymbol{X}_{\text{\b{0}}}\rightarrow 
\boldsymbol{Y}_{\text{\b{0}}}$ is an isomorphism follows by the fact that
each its term $f_{\mu }:X_{\phi (\mu )}\rightarrow Y_{\mu }$, $\mu \in M_{%
\text{\b{9}}}$, is an isomorphisms of Banach spaces. Then $F=\left\langle 
\boldsymbol{f}\right\rangle \in Sh_{\text{\b{0}}}(X,Y)$ is a desired finite
quotient shape isomorphism. If, in addition, $\dim V>\aleph _{0}$, i.e. ($CH$
accepted), $\dim V\geq 2^{\aleph _{0}}$, then, by Lemma 2 (iii), $\dim
Cl(X)=\dim X=\dim V=\dim Y=\dim Cl(Y)\geq 2^{\aleph _{0}}$, and the
conclusion about the countable quotient shapes and the isomorphism $%
F^{\prime }=\left\langle \boldsymbol{f}\right\rangle ^{\prime }:X\rightarrow
Y$ of $Sh_{\aleph _{0}}(\mathcal{N})$ follows by Theorem 1 and Lemma 1. Let
us now find a representative of the quotient shape isomorphism $%
F:X\rightarrow Y$ of $Sh_{\text{\b{0}}}(\mathcal{N})$ ($F^{\prime
}:X\rightarrow Y$ of $Sh_{\aleph _{0}}(\mathcal{N})$ with respect to Banach
spaces, whenever $\kappa >\aleph _{0}$) belonging to $pro$-$\mathcal{H}_{%
\text{\b{0}}}$. If $\dim V<\aleph _{0}$, the canonical rudimentary identity
\textquotedblleft expansions\textquotedblright\ may be replaced by the
isomorphic ones with the Hilbert codomains $F^{n}(2)$, where $n=\dim V\in 
\mathbb{N}$. Let $\dim V>\aleph _{0}$. Firstly, we are to construct an
inverse system

$\boldsymbol{X}^{\prime }=(X_{\lambda }^{\prime },p_{\lambda \lambda
^{\prime }}^{\prime },\Lambda ^{\prime })$

\noindent in $\mathcal{H}_{\text{\b{0}}}$, and an isomorphism

$\boldsymbol{u}:\boldsymbol{X}_{\text{\b{0}}}\rightarrow \boldsymbol{X}%
^{\prime }$

\noindent of $pro$-$\mathcal{B}_{\text{\b{0}}}$. By [10], I.1.2, Theorem 2,
we may assume, without loss of generality, that $\Lambda _{\text{\b{0}}}$ is
cofinite (every $\lambda \in \Lambda _{\text{\b{0}}}$ admits at most
finitely many predecessors). Then the construction goes by induction on $%
|\lambda |$ $\in \{0\}\cup \mathbb{N}$. Let $|\lambda |$ $=0$. Then $%
X_{\lambda }\cong F$ and, by the canonical construction, no pair $\lambda
,\lambda ^{\prime }$ is related whenever $|\lambda |$ $=|\lambda ^{\prime }|$
$=0$. Put $X_{\lambda }^{\prime }=F$ and $p_{\lambda \lambda }^{\prime
}=1_{X_{\lambda }^{\prime }}$, and choose an isomorphism $u_{\lambda
}:X_{\lambda }\rightarrow X_{\lambda }^{\prime }$. Let $n\in \mathbb{N}$,
and assume that, for all $\lambda \in \Lambda _{\text{\b{0}}}$ such that $%
|\lambda |$ $<n$, the construction is made, i.e., for all $\lambda ^{\prime
}\leq \lambda $, the Hilbert spaces $X_{\lambda ^{\prime }}^{\prime }$ and
the isomorphisms $u_{\lambda ^{\prime }}:X_{\lambda ^{\prime }}\rightarrow
X_{\lambda ^{\prime }}^{\prime }$ are chosen and, for every related pair $%
\lambda _{1}\leq \lambda _{2}$ $(\leq \lambda )$ and every related triple $%
\lambda _{1}\leq \lambda _{2}\leq \lambda _{3}$ $(\leq \lambda )$ the bonds $%
p_{\lambda _{1}\lambda _{2}}^{\prime }$, $p_{\lambda _{1}\lambda
_{3}}^{\prime }$, $p_{\lambda _{2}\lambda _{3}}^{\prime }$ are defined
according to commutativity conditions $p_{\lambda _{1}\lambda _{2}}^{\prime
}u_{\lambda _{2}}=u_{\lambda _{1}}p_{\lambda _{1}\lambda _{2}}$ and $%
p_{\lambda _{1}\lambda _{2}}^{\prime }p_{\lambda _{2}\lambda _{3}}^{\prime
}=p_{\lambda _{1}\lambda _{3}}^{\prime }$. Let $\lambda \in \Lambda _{\text{%
\b{0}}}$ such that $|\lambda |$ $=n\in \mathbb{N}$, and let $\lambda
_{1},\ldots ,\lambda _{n}\lvertneqq \lambda $ be all the predecessors of $%
\lambda $. Then, for each $i=1,\ldots ,n$, $|\lambda _{i}|$ $<n$ holds.
Thus, by the inductive assumption, for all $\lambda _{i}$ and all their
predecessors the construction is already made. By the canonical construction
of a quotient expansion, $X_{\lambda }\cong F^{k(\lambda )}$, $k(\lambda
)\in \mathbb{N}$. Put $X_{\lambda }^{\prime }=F^{k(\lambda )}$ and $%
p_{\lambda \lambda }^{\prime }=1_{X_{\lambda }^{\prime }}$, and choose an
isomorphism $u_{\lambda }:X_{\lambda }\rightarrow X_{\lambda }^{\prime }$.
Now define, for each $i=1,\ldots ,n$, $p_{\lambda _{i}\lambda }^{\prime
}=u_{\lambda _{i}}p_{\lambda _{i}\lambda }u_{\lambda }^{-1}$. A
straightforward verification shows that, for every $i$, $p_{\lambda
_{i}\lambda }^{\prime }u_{\lambda }=u_{\lambda _{i}}p_{\lambda _{i}\lambda }$%
, and that, for every related pair $\lambda _{i^{\prime }}\leq \lambda
_{i^{\prime }}$, $p_{\lambda _{i}\lambda _{i^{\prime }}}^{\prime }p_{\lambda
_{i^{\prime }}\lambda }^{\prime }=p_{\lambda _{i}\lambda }^{\prime }$ holds.
This completes the inductive construction of an inverse system $\boldsymbol{X%
}^{\prime }\in Ob(pro$-$\mathcal{H}_{\text{\b{0}}})$ and a pro-morphism $%
\boldsymbol{u}=[[1_{\Lambda _{\text{\b{0}}}},u_{\lambda })]:\boldsymbol{X}_{%
\text{\b{0}}}\rightarrow \boldsymbol{X}^{\prime }$

\noindent (of $pro$-$\mathcal{B}_{\text{\b{0}}}$). Since the index function $%
!_{\Lambda _{\text{\b{0}}}}$ is, obviously, cofinal and each $u_{\lambda }$
is an isomorphism of the Banach spaces$,$ it follows that $\boldsymbol{u}$
is an isomorphism of $pro$-$\mathcal{B}_{\text{\b{0}}}$. In the same way one
can construct an isomorphism $\boldsymbol{v}:\boldsymbol{Y}_{\text{\b{0}}%
}\rightarrow \boldsymbol{Y}^{\prime }$ of $pro$-$\mathcal{B}_{\text{\b{0}}}$%
, where $\boldsymbol{Y}^{\prime }\in Ob(pro$-$\mathcal{H}_{\text{\b{0}}})$.
Clearly,

$\boldsymbol{up}_{\text{\b{0}}}:X\rightarrow \boldsymbol{X}^{\prime }$ \quad
and

$\boldsymbol{vq}_{\text{\b{0}}}:Y\rightarrow \boldsymbol{Y}^{\prime }$

\noindent are $\mathcal{B}_{\text{\b{0}}}$-expansions of $X$ and $Y$,
respectively, having the expansion systems in $\mathcal{H}_{\text{\b{0}}}$.
The proof of the first statement of (i) is complete by putting $\boldsymbol{f%
}^{\prime }=\boldsymbol{vfu}^{-1}$. If $\kappa >\aleph _{0}$, then the
countable quotient shape with respect to Banach spaces reduces to the finite
one (Theorem 1 (i)), and the conclusion follows as previously.

\noindent (ii). We may assume that $X$ is a closed subspace of $Y$ such that 
$\dim X=\dim Y=\kappa \geq 2^{\aleph _{0}}$ and $\dim (Y/X)<\kappa $. Recall
that $\kappa =2^{\aleph _{0}}$ implies $Sh_{\kappa ^{-}}(\mathcal{B})=Sh_{%
\text{\b{0}}}(\mathcal{B})$. Let

$\boldsymbol{p}_{\kappa ^{-}}=(p_{\lambda }):X\rightarrow \boldsymbol{X}%
_{\kappa ^{-}}=(X_{\lambda },p_{\lambda \lambda ^{\prime }},\Lambda _{\kappa
^{-}})$,

$\boldsymbol{q}_{\kappa ^{--}}=(q_{\mu }):Y\rightarrow \boldsymbol{Y}%
_{\kappa ^{-}}=(Y_{\mu },q_{\mu \mu ^{\prime }},M_{\kappa ^{-}})$

\noindent be the canonical $\mathcal{B}_{\kappa ^{-}}$-expansions of $X$, $Y$
respectively. Let

$(\varphi ,i_{\mu }):\boldsymbol{X}_{\kappa ^{-}}\rightarrow \boldsymbol{Y}%
_{\kappa ^{-}}$

\noindent be by the inclusion $i:X\hookrightarrow Y$ induced morphism of $%
inv $-$\mathcal{B}_{\kappa ^{.}}$. By the canonical construction of these $%
\kappa ^{-}$-expansions, if the index function $\varphi :M_{\kappa
^{-}}\rightarrow \Lambda _{\kappa ^{-}}$ is cofinal (i.e., if each $\lambda
\in \Lambda _{\kappa ^{-}}$ admits a $\mu \in M_{\kappa ^{-}}$ such that $%
\varphi (\mu )\geq \lambda $), then the equivalence class

$[(\varphi ,i_{\mu })]:\boldsymbol{X}_{\kappa ^{-}}\rightarrow \boldsymbol{Y}%
_{\kappa ^{-}}$

\noindent is an isomorphism of $pro$-$\mathcal{B}_{\kappa ^{.}}$, implying $%
Sh_{\kappa ^{-}}(X)=Sh_{\kappa ^{-}}(Y)$. Therefore, according to the
previous case, the proof reduces to the verification of the following claim:

\emph{For every closed subspace }$W\trianglelefteq Y$\emph{\ such that }$%
\dim W=\dim Y$\emph{\ and }$\dim (Y/W)<\kappa $\emph{, there exists a closed
subspace }$Z\trianglelefteq X$\emph{\ such that }$\dim Z=\dim X$\emph{\ and }%
$\dim (X/Z)<\kappa $\emph{, and in addition, }$Z\trianglelefteq W$\emph{, }$%
\dim Z=\dim Y$\emph{\ and }$\dim (Y/Z)<\kappa $\emph{. }

\noindent Let such a $W$ be given. Put $Z=X\cap W$. Then $Z\trianglelefteq W$
is a closed subspace of the both $X$ and $Y$. Since the codimensions of $X$
and $W$ are less than $\kappa $, it follows (see also Lemma 3.8 (iii) of
[13]) that

$\dim Z=\dim (X\cap W)=\dim X=\dim Y=\kappa $.

\noindent Further, since, in addition, $\dim (Y/X)<\kappa $ and $\dim
(Y/W)<\kappa ,$ where $\kappa =2^{\aleph _{k}}$, for some ordinal $k\geq 1$ (%
$GCH$ accepted), it follows that

$\dim (X/Z)=\dim (X/(X\cap W))\leq \dim (Y/(X\cap W))=\dim (Y/Z)<\kappa $.

\noindent So the claim is verified, and the proof is completed.
\end{proof}

Given a vectorial space $V$, denote

$\mathcal{W}(V)=\{W\trianglelefteq V\mid \dim W<\aleph _{0}\}$,

$\mathcal{Z}(V)=\{Z\trianglelefteq X\mid \dim Z=\dim X\wedge \dim
(X/Z)<\aleph _{0}\}$.

\noindent Further, given a normed space $X$, denote

$\mathcal{Z}_{cl}(X)=\{Z\trianglelefteq X\mid Cl(Z)=Z\curlywedge \dim Z=\dim
X\wedge \dim (X/Z)<\aleph _{0}\}$.

We shall need the following general facts.

\begin{lemma}
\label{L4}Let $V$ be a vectorial spaces over $F\in \{\mathbb{R},\mathbb{C}\}$%
. If $\dim V>\aleph _{0}$, then

\noindent (i) $|\mathcal{W}(V)|$ $=|V|$ $=\dim V$ $<2^{|V|}=|\mathcal{Z}(V)|$%
.

\noindent Further, if $X=(V,\left\Vert \cdot \right\Vert )$ is a separable
or bidual-like normed space ($X^{\ast \ast }\cong X$), then

\noindent (ii) $|\mathcal{W}(X)|$ $=|\mathcal{Z}_{cl}(X)|$ $=|X|$ $=\dim X$.
\end{lemma}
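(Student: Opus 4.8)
The plan is to prove (i) by elementary counting of subspaces — via finite subsets of $V$ for $\mathcal{W}(V)$, and via finite families of linear functionals for $\mathcal{Z}(V)$ — and then to obtain (ii) by noting that $\mathcal{W}$ does not see the norm, while $\mathcal{Z}_{cl}$ is governed by the \emph{continuous} dual $X^{\ast}$ in exactly the way $\mathcal{Z}(V)$ is governed by the algebraic dual. Throughout I use that $\dim V>\aleph_{0}$ forces $\dim V=2^{\aleph_{k}}$ for some ordinal $k$ ($GCH$ accepted), so that Lemma~3.2 (iv) of [13] gives $|V|=\dim V$, and likewise $|Y|=\dim Y$ for every infinite-dimensional Banach space $Y$.

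\emph{Part (i).} Every $W\in\mathcal{W}(V)$ is the span of a finite subset of $V$, and $V$ has $|V|^{<\aleph_{0}}=|V|$ finite subsets, while the $\dim V=|V|$ lines spanned by the vectors of a Hamel basis are pairwise distinct; hence $|\mathcal{W}(V)|=|V|=\dim V$. For the lower bound on $|\mathcal{Z}(V)|$, fix a Hamel basis $B$ of $V$ and $b_{0}\in B$; to each $c\colon B\setminus\{b_{0}\}\to F$ associate the functional $\varphi_{c}$ with $\varphi_{c}(b_{0})=1$ and $\varphi_{c}(b)=c(b)$ otherwise. Distinct $c$ give non-proportional $\varphi_{c}$, so the kernels $\ker\varphi_{c}$ are $|F|^{\dim V}=(2^{\aleph_{0}})^{\dim V}=2^{\dim V}=2^{|V|}$ distinct subspaces of codimension $1$, each lying in $\mathcal{Z}(V)$; thus $|\mathcal{Z}(V)|\ge 2^{|V|}$. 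Conversely, a $Z\in\mathcal{Z}(V)$ of codimension $n$ is the common kernel $\bigcap_{i=1}^{n}\ker\varphi_{i}$ of the $n$ functionals obtained by composing the quotient map $V\to V/Z\cong F^{n}$ with the coordinate functionals, so $Z$ is determined by a finite subset of the algebraic dual; there are only $(|V|^{\dim V})^{<\aleph_{0}}=2^{|V|}$ of those. Hence $|\mathcal{Z}(V)|=2^{|V|}$, and $|V|<2^{|V|}$ by Cantor's theorem, which proves (i).

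\emph{Part (ii).} Finite-dimensional subspaces are closed in every norm and are independent of it, so $\mathcal{W}(X)=\mathcal{W}(V)$ and $|\mathcal{W}(X)|=|V|=\dim V=\dim X=|X|$ by (i). For $\mathcal{Z}_{cl}(X)$ the bookkeeping of (i) applies verbatim with $X^{\ast}$ (the topological dual) replacing the algebraic dual: a closed hyperplane is exactly $\ker\varphi$ for some nonzero $\varphi\in X^{\ast}$, these are in bijection with the lines of $X^{\ast}$ (so there are $\dim X^{\ast}$ of them, an infinite cardinal since $X^{\ast}$ is infinite-dimensional), each such $\ker\varphi$ lies in $\mathcal{Z}_{cl}(X)$, and a closed subspace of finite codimension $n$ is the common kernel of $n$ continuous functionals because then $X/Z$ is a finite-dimensional Hausdorff space $\cong F^{n}$ on which every functional is continuous. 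As in (i) this gives $|\mathcal{Z}_{cl}(X)|=\dim X^{\ast}$, so it remains to show $\dim X^{\ast}=\dim X$. If $X$ is separable, a continuous functional is determined by its values on a fixed countable dense set, so $|X^{\ast}|\le|F|^{\aleph_{0}}=2^{\aleph_{0}}$, while $\dim X^{\ast}\ge 2^{\aleph_{0}}$ because the dual of the infinite-dimensional separable Banach space $Cl(X)$ has dimension $\ge 2^{\aleph_{0}}$ by the Baire category theorem; since $X$ is separable with $\dim X>\aleph_{0}$ we also have $\dim X=2^{\aleph_{0}}=|X|$, hence $\dim X^{\ast}=\dim X$. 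If $X$ is bidual-like, then $X\cong X^{\ast\ast}$ is a Banach space (a dual space is complete), and I invoke the standard fact that $\dim Y\le\dim Y^{\ast}$ for every infinite-dimensional Banach space $Y$ (for instance, a biorthogonal system of size $\mathfrak{d}(Y)$ yields a $1$-separated subset of $Y^{\ast}$, whence $\mathfrak{d}(Y^{\ast})\ge\mathfrak{d}(Y)$ and $|Y^{\ast}|=\mathfrak{d}(Y^{\ast})^{\aleph_{0}}\ge\mathfrak{d}(Y)^{\aleph_{0}}=|Y|$): applying it to $Y=X$ and to $Y=X^{\ast}$ gives $\dim X\le\dim X^{\ast}\le\dim X^{\ast\ast}=\dim X$, so again $\dim X^{\ast}=\dim X=|X|$. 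In either case $|\mathcal{Z}_{cl}(X)|=\dim X=|X|$.

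The routine parts are the cardinal arithmetic ($|V|^{<\aleph_{0}}=|V|$, $(2^{|V|})^{<\aleph_{0}}=2^{|V|}$) and the checks that the listed subspaces genuinely belong to $\mathcal{W}$, $\mathcal{Z}$, $\mathcal{Z}_{cl}$. The one genuinely nontrivial input, which I expect to be the main obstacle, is the bidual-like subcase of (ii): bounding $\dim X^{\ast}$ from above by $\dim X$ requires the fact that passing to the continuous dual never decreases the algebraic dimension of an infinite-dimensional Banach space, combined with $X\cong X^{\ast\ast}$; by contrast the separable subcase is elementary once one recalls that separability together with $\dim X>\aleph_{0}$ forces $\dim X=|X|=2^{\aleph_{0}}$.
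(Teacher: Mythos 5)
Your proof follows essentially the same route as the paper's: both count $\mathcal{W}$ directly from finite subsets of $V$, count $\mathcal{Z}$ and $\mathcal{Z}_{cl}$ through the algebraic dual $V^{+}$ and the continuous dual $X^{\ast }$ respectively (the paper reduces to the codimension-one strata $\mathcal{Z}_{1}$ and $\mathcal{Z}_{cl,1}$, you count finite families of functionals all at once --- same bookkeeping), handle the separable case by bounding $|X^{\ast }|$ via restriction to a countable dense set, and handle the bidual-like case by sandwiching $X^{\ast }$ between $X$ and $X^{\ast \ast }\cong X$. The one place you go beyond the paper is in trying to \emph{prove} the inequality $\dim Y\leq \dim Y^{\ast }$ that the sandwich requires, where the paper simply asserts the corresponding $|X|\leq |X^{\ast }|$ as clear; your justification via a biorthogonal system of cardinality $\mathfrak{d}(Y)$ is not valid in general, since a nonseparable Banach space need not admit a biorthogonal system of size equal to its density character (under $CH$, Kunen's $C(K)$ has no uncountable one at all). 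So you have correctly isolated the genuinely nontrivial input, but the argument you offer for it has a gap --- one the paper shares, since it leaves the same inequality unproved.
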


\begin{proof}
The equality $|V|$ $=\dim V$, and thus $|X|$ $=\dim X$ as well, follows by
[13], Lemma 3.2 (iv). Notice that $\mathcal{W}(V)$ is the disjoint union of
all $\mathcal{W}_{n}(V)$, $n\in \{0\}\cup \mathbb{N}$, where

$\mathcal{W}_{n}(V)=\{W\trianglelefteq V\mid \dim W=n\}$.

\noindent Hereby, $\mathcal{W}_{0}(V)=\{\theta \}$. The same holds for $%
\mathcal{W}(X)$. (Recall that every finite-dimensional subspace $W$ of $X$
is closed. ) Similarly, $\mathcal{Z}(V)$ is the disjoint union of all $%
\mathcal{Z}_{n}(V)$, $n\in \{0\}\cup \mathbb{N}$, where

$\mathcal{Z}_{n}(V)=\{Z\trianglelefteq V\mid \dim Z=\dim V\wedge \dim
(V/Z)=n\}$.

\noindent Hereby $\mathcal{Z}_{0}(V)=\{V\}$. In the same way, $\mathcal{Z}%
_{cl}(X)$ is the disjoint union of all $\mathcal{Z}_{cl,n}(X)$, $n\in
\{0\}\cup \mathbb{N}$, where

$\mathcal{Z}_{c\ln ,}(X)=\{Z\trianglelefteq X\mid Cl(Z)=Z\curlywedge \dim
Z=\dim X\wedge \dim (X/Z)=n\}$

\noindent and $\mathcal{Z}_{cl,0}(X)=\{X\}$. Observe that $|\mathcal{W}%
_{1}(V)|$ $=|V|$, and $|\mathcal{W}_{1}(X)|$ $=|X|$ as well, hold because of 
$|V|$ $\geq 2^{\aleph _{0}}$. Further, one readily sees that, for every $%
n\neq 0$, $\dim V>n$ implies $|\mathcal{W}_{n}(V)|$ $=|\mathcal{W}_{1}(V)|$ $%
\geq 2^{\aleph _{0}}$. The same holds true for $X$. Now observe that

$|\mathcal{Z}_{0}(V)\cup \mathcal{Z}_{1}(V)|$ $=|V^{+}|$,

\noindent where $V^{+}$ denotes the (algebraic) dual of $V,$ while

$|\mathcal{Z}_{0}(X)\cup \mathcal{Z}_{1}(X)|$ $=|X^{\ast }|$,

\noindent where $X^{\ast }$ denotes the (normed) dual of $X$. Then, it is
easy to see that, for every $n\neq 0$, $|\mathcal{Z}_{n}(V)|$ $=|\mathcal{Z}%
_{1}(V)|$ and $|\mathcal{Z}_{cl,n}(X)|$ $=|\mathcal{Z}_{cl,1}(X)|$.
Consequently, statement (i) reduces to

$|\mathcal{W}_{1}(V)|$ $=|V|$ $=\dim V$ $<2^{|V|}=|V^{+}|$ $=|\mathcal{Z}%
_{1}(V)|$,

\noindent that holds true because of $\dim V\geq 2^{\aleph _{0}}$.
Similarly, concerning (ii), it suffices to prove that

$|\mathcal{W}_{1}(X)|$ $=|X|$ $=|X^{\ast }|$ $=|\mathcal{Z}_{cl,1}(X)|$,

\noindent where the first and third equality hold already. It remains to
prove the second one. Clearly, $|X|$ $\leq |X^{\ast }|$. Assume, firstly,
that $X$ is a separable normed space. Then $|X|$ $=2^{\aleph _{0}}$, while
the cardinality of the set $F^{X}$ of all functions of $X$ to $F$ is $%
|F|^{|X|}=2^{\aleph _{1}}$ ($GCH$ accepted). Since $X$ is separable, the
cardinality of the set $c(X,F)$ of all continuous functions of $X$ to $F$ is
determined by \emph{countability} of a dense subset on $X$. This implies
that $|c(X,F)|$ $=2^{\aleph _{0}}=|X|$. Since $|X^{\ast }|$ $\leq |c(X,F)|$,
the conclusion follows. Finally, if $X$ is a bidual-like normed space, i.e., 
$X^{\ast \ast }\cong X$, then $|X^{\ast \ast }|$ $=|X|$, implying $|X^{\ast
}|$ $=|X|$.
\end{proof}

\begin{remark}
\label{R1}If $\dim X=\aleph _{0}$, i.e., $X\cong (F_{0}^{\mathbb{N}%
},\left\Vert \cdot \right\Vert )$ (the direct sum), then $the$ cardinalities
considered in Lemma 4 are $|\mathcal{W}(F_{0}^{\mathbb{N}})|$ $=|\mathcal{Z}%
_{cl}(F_{0}^{\mathbb{N}})|$ $=2^{\aleph _{0}}=|F_{0}^{\mathbb{N}}|$ $>\dim X$%
. Further, in all finite-dimensional cases, i.e., $X\cong F^{n}$, $n\in 
\mathbb{N}$, and $|\mathcal{W}(F^{n})|$ $=|F^{n}|$ $=|F|$ $=2^{\aleph _{0}}$%
, while $|\mathcal{Z}_{cl}(F^{n})|$ $=1$ because $\mathcal{Z}=\{X\}$. Notice
that, for the \emph{full} subcategory $\mathcal{U}\subseteq \mathcal{N}$ ($U$
- unitary; \emph{all} the continuous linear functions included), it holds $%
X\cong Y$ in $\mathcal{U}$ if and only if $X\cong Y$ in $\mathcal{N}$.
Namely, though the restriction to the linear inner-product preserving
functions is convenient for making quotient spaces, it breaks the shape
relationship with $\mathcal{N}$ (hereby a full subcategory is needed!). In
other words, the restriction to the inner product preserving morphisms,
would lead to a \emph{new} quotient shape theory of unitary (Hilbert) spaces
and linear inner-product preserving functions.
\end{remark}

Though the separability assumption of a non-bidual-like space $X$ in our
proof of Lemma 4 (ii) is essential, the following question still makes sense:

\textbf{Question 1.} \emph{Does Lemma 4 (ii) hold true for every normed
(Banach) space }$X$\emph{\ having }$\dim X\neq \aleph _{0}?$

Namely, the author can prove that $X=l_{\infty }$ (non-separable and
non-bidual-like) is an example towards the affirmative answer.

\begin{theorem}
\label{T2}The finite quotient shape type of normed spaces over the same
field is a strict invariant of the (algebraic) dimension, i.e.,

\noindent (i) $\quad (\dim X=\dim Y)\Rightarrow (Sh_{\text{\b{0}}}(X)=Sh_{%
\text{\b{0}}}(Y))$.

\noindent (and with respect to $\mathcal{B}_{\text{\b{0}}}$ as well).
Furthermore, there exists a quotient shape isomorphism $F=\left\langle 
\boldsymbol{f}\right\rangle :X\rightarrow Y$ of $Sh_{\text{\b{0}}}(NVectlF)$
induced by an isomorphism $\boldsymbol{f}$ of $pro$-$\mathcal{H}_{\text{\b{0}%
}}$.

\noindent Further, either $\max \{\dim X,\dim Y\}\leq \aleph _{0}$ or $%
X,Y\in Ob(s\mathcal{B}\cup b\mathcal{B})$ such that $\min \{\dim X,\dim
Y\}>\aleph _{0},$ then

\noindent (ii) $\quad (\dim X=\dim Y)\Leftrightarrow (Sh_{\text{\b{0}}%
}(X)=Sh_{\text{\b{0}}}(Y))\Leftrightarrow (Sh_{\aleph _{0}}(X)=Sh_{\aleph
_{0}}(Y))$

\noindent hold true. Consequently, the classifications on $s\mathcal{B}\cup b%
\mathcal{B}$ by (algebraic) dimension, by the finite quotient shape $and$ by
the countable quotient shape coincide.

\noindent Furthermore, for the bidual-like Banach spaces, if there exists a
closed embedding $e:X\rightarrow Y$ such that $\dim (Y/e[X])<\dim Y=\kappa $%
, then

\noindent (iii) $(\dim X=\dim Y)\Leftrightarrow (Sh_{\kappa
^{-}}(X)=Sh_{\kappa ^{-}}(Y))$.
\end{theorem}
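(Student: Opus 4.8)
The plan is to reduce Theorem~2 almost entirely to the lemmas already proved, essentially assembling them. For part~(i), I would invoke Lemma~3~(i) directly: it gives, for any two normed spaces $X,Y$ over the same field with $\dim X=\dim Y$, a finite quotient shape isomorphism $F=\langle\boldsymbol{f}\rangle:X\to Y$ of $Sh_{\text{\b{0}}}(\mathcal{N})$ induced by an isomorphism $\boldsymbol{f}'$ of $pro$-$\mathcal{H}_{\text{\b{0}}}$, and also the statement with respect to $\mathcal{B}_{\text{\b{0}}}$. So part~(i) is immediate once one remarks that the algebraic dimension is the only input needed, and that the case $\dim X<\aleph_0$ is handled by the $F^n(2)$ remark inside the proof of Lemma~3. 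The notation $NVectlF$ in the statement I would read as $\mathcal{N}=NVect_F$.

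For the forward implications in parts~(ii) and~(iii) there is nothing new: they are special instances of~(i) together with Lemma~3. Indeed, in part~(ii), if $\max\{\dim X,\dim Y\}\le\aleph_0$ then either both spaces are finite dimensional (trivial) or $\dim X=\dim Y=\aleph_0$, and then $Sh_{\aleph_0}(X)=Sh_{\aleph_0}(Y)$ follows since the countable quotient shape with respect to Banach spaces reduces to the finite one by Theorem~1~(i) (and the countable-dimensional spaces themselves have rudimentary expansions); if instead $\dim X=\dim Y=\kappa>\aleph_0$ then Lemma~3~(i) gives both $Sh_{\text{\b{0}}}(X)=Sh_{\text{\b{0}}}(Y)$ and $Sh_{\aleph_0}(X)=Sh_{\aleph_0}(Y)$. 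For~(iii), the forward direction is again~(i). So the whole content of~(ii) and~(iii) beyond~(i) is the \emph{converse}: that coincidence of finite (or countable, or $\kappa^-$-) quotient shape types forces equality of algebraic dimensions, on the restricted classes indicated.

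The converse is where the hypotheses $X,Y\in Ob(s\mathcal{B}\cup b\mathcal{B})$, resp.\ bidual-like, and the dimension restrictions genuinely enter, and this is the step I expect to be the main obstacle. The idea is that a quotient shape isomorphism $X\to Y$ in $Sh_{\text{\b{0}}}$ gives an isomorphism of the canonical $\mathcal{B}_{\text{\b{0}}}$-expansions in $pro$-$\mathcal{B}_{\text{\b{0}}}$, and isomorphic $pro$-objects have isomorphic (hence equinumerous) cofinal index structures; the relevant invariant read off from the expansion is the cardinality of the directed set of ``small-codimension closed subspaces,'' which by construction is $|\mathcal{Z}_{cl}(X)|$. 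By Lemma~4~(ii), for $X$ separable or bidual-like with $\dim X>\aleph_0$ one has $|\mathcal{Z}_{cl}(X)|=\dim X$, so this cardinal \emph{recovers} the dimension; matching it for $X$ and $Y$ then yields $\dim X=\dim Y$. The dimension restriction $\min\{\dim X,\dim Y\}>\aleph_0$ in~(ii) is exactly what rules out the pathological pair $\{\aleph_0,2^{\aleph_0}\}$ flagged in the introduction — by Remark~1, when $\dim X=\aleph_0$ the cardinal $|\mathcal{Z}_{cl}(F_0^{\mathbb{N}})|=2^{\aleph_0}$ coincides with that of a $2^{\aleph_0}$-dimensional space, so the invariant fails to separate those dimensions, and the class $s\mathcal{B}\cup b\mathcal{B}$ contains no $\aleph_0$-dimensional member anyway since Banach. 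I would therefore: (1) argue that a finite quotient shape isomorphism induces, via the canonical expansions and the pro-reflectivity machinery of Section~2, a cofinality-preserving correspondence between the directed sets $\Lambda_{\text{\b{0}}}$ built from $\mathcal{Z}_{cl}(X)$ and $M_{\text{\b{0}}}$ built from $\mathcal{Z}_{cl}(Y)$; (2) extract from this the equality $|\mathcal{Z}_{cl}(X)|=|\mathcal{Z}_{cl}(Y)|$; (3) apply Lemma~4~(ii) to both sides to get $\dim X=\dim Y$; (4) handle the countable-quotient-shape equivalence in~(ii) by the reduction of Theorem~1~(i), and the $Sh_{\kappa^-}$ statement in~(iii) by noting $\mathcal{Z}_{cl}$ is replaced by its $\kappa^-$-analogue and that $\kappa=2^{\aleph_0}$ collapses $Sh_{\kappa^-}$ to $Sh_{\text{\b{0}}}$ as recorded in Lemma~3~(ii). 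The delicate point throughout is step~(1)–(2): making precise that the cardinality of the cofinal part of the index set is a genuine $pro$-$\mathcal{B}_{\text{\b{0}}}$-isomorphism invariant and is computed by $|\mathcal{Z}_{cl}|$; the finiteness of all bonding spaces and the cofiniteness normalization from [10, I.1.2] should make this manageable, but it must be stated carefully to avoid circularity with Lemma~4.
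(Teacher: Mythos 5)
Your reduction of part (i), of the forward implications in (ii) and (iii), and of the strictness claim to Lemma 3 and Theorem 1 matches the paper exactly. The genuine gap is in the converse of (ii), which you correctly identify as the main content but then rest on a principle that is false: isomorphic objects of $pro$-$\mathcal{B}_{\text{\b{0}}}$ do \emph{not} in general have equinumerous (cofinal) index sets --- the rudimentary system $\lfloor X\rfloor$ is isomorphic to any system over an arbitrarily large directed set whose terms all equal $X$ and whose bonds are identities, so neither $|\Lambda|$ nor the cofinality of $\Lambda$ is a $pro$-isomorphism invariant. Hence steps (1)--(2) of your outline, as stated, do not go through, and you do not supply the argument that replaces them; flagging the step as ``delicate'' does not discharge it.

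What the paper actually does at this point: after normalizing both canonical expansions to \emph{cofinite} index sets $\bar{\Lambda}$, $\bar{M}$ (of unchanged cardinalities $|\mathcal{Z}_{cl}(X)|=\dim X=\kappa$ and $|\mathcal{Z}_{cl}(Y)|=\dim Y=\kappa^{\prime}$, by Lemma 4 (ii)), it chooses special increasing representatives $(\phi ,f_{\bar{\mu}})$ of $\boldsymbol{f}$ and $(\psi ,g_{\bar{\lambda}})$ of $\boldsymbol{f}^{-1}$, and uses that all bonds are epimorphisms to upgrade the composition relation to the exact identity $f_{\bar{\mu}}g_{\phi (\bar{\mu})}=q_{\bar{\mu}\,\psi \phi (\bar{\mu})}^{\prime }$, which in particular forces $\psi \phi (\bar{\mu})\geq \bar{\mu}$ for \emph{every} $\bar{\mu}$. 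Assuming $\kappa ^{\prime }>\kappa $, GCH gives $\kappa ^{\prime }\geq 2^{\kappa }$, so by pigeonhole some $\bar{\lambda}_{0}\in \bar{\Lambda}$ has $\kappa ^{\prime }$-many $\phi $-preimages; then $\bar{\mu}_{0}=\psi (\bar{\lambda}_{0})$ dominates infinitely many elements of $\bar{M}$, contradicting cofiniteness. This contradiction argument is the concrete substitute for your ``equinumerous index structures'' claim, and without it (or an equivalent) the key equivalence in (ii) --- and therefore (iii), which you correctly reduce to it --- remains unproved.
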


\begin{proof}
Let $X$ and $Y$ be normed spaces over the same field such that $\dim X=\dim
Y $. Since one may assume that $X=(V,\left\Vert \cdot \right\Vert )$ and $%
Y=(V,\left\Vert \cdot \right\Vert ^{\prime })$, the implication (i), and the
necessity parts in (ii) follow by Theorem 1 and Lemma 3, while the second
sufficiency in (ii) holds trivially.

In order to prove that the converse of (i) does not hold, let us consider
the direct sum vectorial (algebraic) space $F_{0}^{\mathbb{N}}$ ($%
\trianglelefteq l_{p}\trianglelefteq F^{\mathbb{N}}$) and the corresponding
normed subspaces $F_{0}^{\mathbb{N}}(p)$ (of $l_{p}$), $1\leq p\leq \infty $%
, that all are of dimension $\dim F_{0}^{\mathbb{N}}=\aleph _{0}$. Since $%
Cl(F_{0}^{\mathbb{N}}(p))=l_{p}$, $1\leq p<\infty $, and $Cl(F_{0}^{\mathbb{N%
}}(\infty ))=l_{p}(\infty )$ in $l_{p}(\infty )$, and $Cl(F_{0}^{\mathbb{N}%
}(\infty ))=c_{0}$ in $l_{\infty }$ (see also [15], Section 4), and since $%
\dim l_{p}=\dim c_{0}=2^{\aleph _{0}}>\dim (F_{0}^{\mathbb{N}}(p))$ and $Sh_{%
\text{\b{0}}}(l_{p})=Sh_{\text{\b{0}}}(F_{0}^{\mathbb{N}}(p))=Sh_{\text{\b{0}%
}}(c_{0})$ (Lemma 1), it follows that there is a lot of counterexamples in
the dimensional pair $\{\aleph _{0},2^{\aleph _{0}}\}$.

\noindent Let us now prove the sufficiency part of the first equivalence in
(ii), Let $X$, $Y$ be a pair of normed spaces over the same field such that
either the both $\dim X,\dim Y\leq \aleph _{0}$ or $X,Y\in Ob(s\mathcal{N}%
\cup b\mathcal{N})$ having $\dim X,\dim Y\geq 2^{\aleph _{0}}$ ($CH$
accepted), and let us assume that $Sh_{\text{\b{0}}}(X)=Sh_{\text{\b{0}}}(Y)$%
. If $\dim X<\aleph _{0}$ and $\dim Y<\aleph _{0}$, then the both $X$ and $Y$
have to be isomorphic to an $F^{n}$, $n\in \mathbb{N}$, and hence, $\dim
X=\dim Y$. Further, $Sh_{\text{\b{0}}}(X)=Sh_{\text{\b{0}}}(Y)$ and $n=\dim
X<\dim Y=\aleph _{0}$ (or $n=\dim Y<\dim X=\aleph _{0}$) immediately leads
to a contradiction. It remains to prove the statement in the case of $X,Y\in
Ob(s\mathcal{N}\cup s\mathcal{N})$ having $\dim X\geq 2^{\aleph _{0}}$ and $%
\dim Y\geq 2^{\aleph _{0}}$. Let

$\boldsymbol{p}_{\text{\b{0}}}=(p_{\lambda }):X\rightarrow \boldsymbol{X}_{%
\text{\b{0}}}=(X_{\lambda },p_{\lambda \lambda ^{\prime }},\Lambda _{\text{%
\b{0}}})$,

$\boldsymbol{q}_{\text{\b{0}}^{-}}=(q_{\mu }):Y\rightarrow \boldsymbol{Y}_{%
\text{\b{0}}}=(Y_{\mu },q_{\mu \mu ^{\prime }},M_{\text{\b{0}}})$

\noindent be the canonical $\mathcal{N}_{\text{\b{0}}}$-expansions of $X$, $%
Y $ respectively, that also are the $\mathcal{B}_{\text{\b{0}}}$-expansions.
Then $\boldsymbol{X}_{\text{\b{0}}}\cong \boldsymbol{Y}_{\text{\b{0}}}$ in $%
pro$.$\mathcal{B}_{\text{\b{0}}}$. According to [10], I.1.2, Theorem 2, we
may pass to the associated cofinite (i.e., with cofinite index sets) inverse
systems

$\boldsymbol{X}^{\prime }=(X_{\bar{\lambda}}^{\prime },p_{\bar{\lambda}\bar{%
\lambda}^{\prime }}^{\prime },\bar{\Lambda})$ \quad and

$\boldsymbol{Y}^{\prime }=(Y_{\bar{\mu}}^{\prime },q_{\bar{\mu}\bar{\mu}%
}^{\prime },\bar{M})$

\noindent such that $|\bar{\Lambda}|$ $=|\Lambda _{\text{\b{0}}}|$, $|\bar{M}%
|$ $=|M_{\text{\b{0}}}|$ and $\boldsymbol{X}^{\prime }\cong \boldsymbol{X}_{%
\text{\b{0}}}$, $\boldsymbol{Y}^{\prime }\cong \boldsymbol{Y}_{\text{\b{0}}}$
in $pro$-$\mathcal{B}_{\text{\b{0}}}$. Then $\boldsymbol{X}^{\prime }\cong 
\boldsymbol{Y}^{\prime }$ in $pro$-$\mathcal{B}_{\text{\b{0}}}$. Let $%
\boldsymbol{f}:\boldsymbol{X}^{\prime }\rightarrow \boldsymbol{Y}^{\prime }$
be an isomorphism. Choose a special representative $(\phi ,f_{\bar{\mu}})$
of $\boldsymbol{f}$ (having the index function $\phi $ increasing, [10], I.
1.2, Lemma 3). Then, for every related pair $\bar{\mu}\leq \bar{\mu}^{\prime
}$,

$f_{\mu }p_{\phi (\bar{\mu})\phi (\bar{\mu}^{\prime })}^{\prime }=q_{\bar{\mu%
}\bar{\mu}^{\prime }}^{\prime }f_{\bar{\mu}^{\prime }}$.

\noindent Similarly, there exists a special representative $(\psi ,g_{\bar{%
\lambda}})$ of $\boldsymbol{f}^{-1}:\boldsymbol{Y}^{\prime }\rightarrow 
\boldsymbol{X}^{\prime }$ ($\psi $ is increasing) such that, for every
related pair $\bar{\lambda}\leq \bar{\lambda}^{\prime }$,

$g_{\bar{\lambda}}q_{\psi (\bar{\lambda})\psi (\bar{\lambda}^{\prime \prime
})}^{\prime }=p_{\bar{\lambda}\bar{\lambda}^{\prime }}^{\prime }g_{\bar{\mu}%
^{\prime }}$.

\noindent Since

$(\phi ,f_{\bar{\mu}})\circ (\psi ,g_{\bar{\lambda}})=(\psi \phi ,f_{\bar{\mu%
}}g_{\phi (\bar{\mu})})\sim (1_{M_{\kappa ^{-}}},1_{Y_{\bar{\mu}}^{\prime
}}) $,

\noindent we conclude that, for every $\bar{\mu}\in \bar{M}$, there exists a 
$\bar{\mu}^{\prime }\geq \bar{\mu},\psi \phi (\bar{\mu})$ such that

$f_{\bar{\mu}}g_{\phi (\bar{\mu})}q_{\psi \phi (\bar{\mu})\bar{\mu}^{\prime
}}^{\prime }=q_{\bar{\mu}\bar{\mu}^{\prime }}^{\prime }$.

\noindent Recall that all $q_{\mu \mu ^{\prime }}$ are epimorphisms, and
thus such are all $q_{\bar{\mu}_{1}\bar{\mu}_{2}}^{\prime }$ ($q_{\bar{\mu}%
\bar{\mu}^{\prime }}^{\prime }=$

\noindent $q_{\bar{\mu}\psi \phi (\bar{\mu})}^{\prime }q_{\psi \phi (\bar{\mu%
})\bar{\mu}^{\prime }}^{\prime }$, by the choice of the special
representatives). Therefore,

$(\forall \bar{\mu}\in M_{\kappa ^{-}})\quad f_{\bar{\mu}}g_{\phi (\bar{\mu}%
)}=q_{\bar{\mu}\psi \phi (\mu )}^{\prime }$.

\noindent Now, assume to the contrary, i.e., that $\dim X\neq \dim Y$. We
may assume, without loss of generality, that $\dim X=\kappa \geq 2^{\aleph
_{0}}$, $\dim Y=\kappa ^{\prime }>\kappa $, i.e. ($GCH$ accepted), that $%
\kappa ^{\prime }\geq 2^{\kappa }$. Then, by Lemma 4 (ii), $|\Lambda _{\text{%
\b{0}}}$\TEXTsymbol{\vert} $=|\mathcal{Z}_{cl}(X)|$ $=\kappa $ and $|M_{%
\text{\b{0}}}|$ $=|\mathcal{Z}_{cl}(Y)$ $=|\kappa ^{\prime }$. Since

$\left\vert \bar{M}\right\vert =\left\vert M_{\text{\b{0}}}\right\vert
=\kappa ^{\prime }\geq 2^{\kappa }>\kappa =\left\vert \Lambda _{\text{\b{0}}%
}\right\vert =\left\vert \bar{\Lambda}\right\vert \geq 2^{\aleph _{0}}$,

\noindent there exists a $\bar{\lambda}_{0}\in \bar{\Lambda}$ and there
exists infinitely many (actually, $\kappa ^{\prime }$ many) elements $\bar{%
\mu}\in \bar{M}$ such that $\phi (\bar{\mu})=\bar{\lambda}_{0}$. Put $\bar{%
\mu}_{0}=\psi (\bar{\lambda}_{0})\in \bar{M}$. Then,

$\bar{\mu}_{0}=\psi \phi (\bar{\mu})\geq \bar{\mu}$

\noindent for infinitely many $\bar{\mu}\in \bar{M}$. It follows that $\bar{M%
}$ is not cofinite - a contradiction. Finally, the statement concerning
separable Banach spaces follows by Theorem 1 (i).

\noindent It remains to prove the last statement. Since $Sh_{\kappa
^{-}}(X)=Sh_{\kappa ^{-}}(Y)$ implies $Sh_{\text{\b{0}}}(X)=Sh_{\text{\b{0}}%
}(Y)$, the sufficiency part of (iii) holds by the sufficiency part of (ii)
in general, i.e., without any additional assumption. Conversely, let $X,Y\in
Obb\mathcal{B}$ such that $\dim X=\dim Y\equiv \kappa $ and let there exist
a closed embedding $e:X\rightarrow Y$ such that $\dim (Y/e[X])<\kappa $. If $%
\kappa \leq 2^{\aleph _{0}}$, then the $\kappa ^{-}$-quotient shape reduces
to \b{0}-quotient shape, and the conclusion follows, in general, by the
necessity part of (ii). Finally, in the case of $\kappa >2^{\aleph _{0}}$,
the conclusion follows by Lemma 3 (ii).
\end{proof}

\begin{corollary}
\label{C1}All infinite-dimensional separable normed spaces over $F$
(especially, all the direct sum spaces $(F_{0}^{\mathbb{N}},\left\Vert \cdot
\right\Vert )$ and all the $C_{p}(n)$ spaces, $1\leq p<\infty $, $n\in 
\mathbb{N}$) and all $2^{\aleph _{0}}$-dimensional normed spaces over $F$
(especially, the Banach spaces $l_{p}$ and $L_{p}(n)$, $1\leq p\leq \infty $%
, $n\in \mathbb{N}$, the subspaces $c_{0}\trianglelefteq c\trianglelefteq
l_{\infty }$ and Sobolev spaces $W_{p}^{(k)}(\Omega _{n})$) belong to the
same and unique non-trivial quotient shape type with respect to Banach
spaces, that is the finite one. A representing space may be the $\aleph _{0}$%
-dimensional unitary direct sum normed subspace $F_{0}^{\mathbb{N}}(2)$ of $%
l_{2}$ as well as the Hilbert space $l_{2}$.
\end{corollary}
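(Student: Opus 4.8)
The plan is to fix $l_{2}$ as the reference object and to assemble the statement from the dimension-invariance of the finite quotient shape (Theorem 2 (i)) together with its insensitivity to passing to a dense subspace (Lemma 1). Note at the outset that for $\kappa =\aleph _{0}$ the expansion categories $\mathcal{N}_{\text{\b{0}}}$ and $\mathcal{B}_{\text{\b{0}}}$ coincide -- every finite-dimensional normed space being complete -- so here ``finite quotient shape'' already means ``finite quotient shape with respect to Banach spaces'' (Theorem 1 (i) records the same phenomenon from the other side). First I would dispose of the $2^{\aleph _{0}}$-dimensional normed spaces: if $\dim X=2^{\aleph _{0}}$, then, as $\dim l_{2}=2^{\aleph _{0}}$, Theorem 2 (i) gives $Sh_{\text{\b{0}}}(X)=Sh_{\text{\b{0}}}(l_{2})$. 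This settles $l_{p}$ and $L_{p}(n)$ ($1\leq p\leq \infty $), the spaces $c_{0}\trianglelefteq c\trianglelefteq l_{\infty }$, and the Sobolev spaces $W_{p}^{(k)}(\Omega _{n})$ at one stroke, once one recalls that each of these Banach spaces has algebraic dimension exactly $2^{\aleph _{0}}$ -- at least $2^{\aleph _{0}}$ because an infinite-dimensional Banach space has dimension $\geq 2^{\aleph _{0}}$, and at most $|X|=2^{\aleph _{0}}$ in each of these examples (cf. [13]).

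Next I would treat the infinite-dimensional separable normed spaces. Given such an $X$, I would pass to its Banach completion $Cl(X)$ taken in the second dual. Then $Cl(X)$ is again separable (a countable dense subset of $X$ remains dense in $Cl(X)$) and infinite-dimensional, hence $\dim Cl(X)=2^{\aleph _{0}}$ -- it is $\geq 2^{\aleph _{0}}$ as an infinite-dimensional Banach space and $\leq |X|=2^{\aleph _{0}}$, and when $\dim X=\aleph _{0}$ the strict increase is also Lemma 2 (iii). Since $X$ is dense in $Cl(X)$ and $\dim Cl(X)\geq \aleph _{0}$, Lemma 1 with $\kappa =\aleph _{0}$ shows that any $\mathcal{B}_{\text{\b{0}}}$-expansion of $Cl(X)$, composed with the inclusion $X\hookrightarrow Cl(X)$, is a $\mathcal{B}_{\text{\b{0}}}$-expansion of $X$; hence $Sh_{\text{\b{0}}}(X)=Sh_{\text{\b{0}}}(Cl(X))$, which equals $Sh_{\text{\b{0}}}(l_{2})$ by the previous paragraph. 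This in particular covers every direct sum space $(F_{0}^{\mathbb{N}},\left\Vert \cdot \right\Vert )$ and all the $C_{p}(n)$ spaces, with no recourse to $CH$.

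It then remains to record the representatives and the non-triviality. The two cases above show that every space named in the statement carries the single finite quotient shape type $Sh_{\text{\b{0}}}(l_{2})$; the space $l_{2}$ itself represents it, and $F_{0}^{\mathbb{N}}(2)$, being a dense $\aleph _{0}$-dimensional subspace of $l_{2}$, represents it as well by one more application of Lemma 1 ($\kappa =\aleph _{0}$). The type is non-trivial: applying Theorem 2 (ii) to $F^{n}$ and $F_{0}^{\mathbb{N}}$, whose dimensions have maximum $\aleph _{0}$, the inequality $n\neq \aleph _{0}$ forces $Sh_{\text{\b{0}}}(F^{n})\neq Sh_{\text{\b{0}}}(F_{0}^{\mathbb{N}})=Sh_{\text{\b{0}}}(l_{2})$ for every $n\in \mathbb{N}$, so the common type is distinct from every finite-dimensional quotient shape type and, containing the non-isomorphic spaces $F_{0}^{\mathbb{N}}$ and $l_{2}$, is strictly coarser than isomorphism.

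I do not expect a genuine obstacle: the proof is a bookkeeping assembly of Theorems 1 and 2 and Lemma 1. The two points that deserve attention are (a) the assertion that the Banach completion of an infinite-dimensional separable normed space has algebraic dimension exactly $2^{\aleph _{0}}$, which rests on the standard fact that infinite-dimensional Banach spaces have dimension $\geq 2^{\aleph _{0}}$ together with Lemma 2, and (b) that Lemma 1 may legitimately be invoked with $\kappa =\aleph _{0}$ to transport the \emph{finite} quotient shape -- not merely a Banach-valued variant -- across a dense inclusion, which is precisely what the identity $\mathcal{N}_{\text{\b{0}}}=\mathcal{B}_{\text{\b{0}}}$ and Theorem 1 (i) make available.
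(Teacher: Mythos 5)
Your proposal is correct and follows essentially the same route as the paper's own proof: reduce every space in the list to a $2^{\aleph_{0}}$-dimensional Banach space (either directly, or by passing to the completion $Cl(X)$ of an infinite-dimensional separable normed space, which is separable Banach and hence $2^{\aleph_{0}}$-dimensional), transport the finite quotient shape across the dense inclusion by Lemma 1, and conclude by the dimension-invariance of Theorem 2. Your write-up is merely more explicit about the identification $\mathcal{N}_{\text{\b{0}}}=\mathcal{B}_{\text{\b{0}}}$, the choice of $l_{2}$ and $F_{0}^{\mathbb{N}}(2)$ as representatives, and the non-triviality of the type, all of which the paper leaves implicit.
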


\begin{proof}
Every infinite-dimensional separable Banach space has the (algebraic)
dimension $2^{\aleph _{0}}$. If an infinite-dimensional separable normed
space $X$ is not complete, then one may use the inclusion $%
j_{X}:X\rightarrow Cl(X)$, where $X$ is isometrically embedded into its
second dual space. Then $Cl(X)$ is an infinite-dimensional separable Banach
space, and hence, $\dim Cl(X)=2^{\aleph _{0}}$. All the considered concrete
spaces belong to the mentioned classes. Thus, the conclusion follows by
Lemma 1 (or Theorem 3 [15]) and Theorem 2.
\end{proof}

\begin{remark}
\label{R2}(i) A counterexample for the converse in Theorem 2 (i), in the
dimensional pair $\{\aleph _{0},2^{\aleph _{0}}\}$, is $X=(F_{0}^{\mathbb{N}%
},\left\Vert \cdot \right\Vert )$ and $Y=Cl(X)$ - the Banach completion of $%
X $ in the second dual space of $X$. Namely, by Lemmata 1 and 2 ($CH$
accepted) $Sh_{\text{\b{0}}}(X)=Sh_{\text{\b{0}}}(Y)$ and $\dim X=\aleph
_{0}<2^{\aleph _{0}}=\dim Y$.

\noindent (ii) Notice that, although each direct sum normed space $(F_{0}^{%
\mathbb{N}},\left\Vert \cdot \right\Vert )$ may represent the (unique)
finite quotient shape type of all $2^{\aleph _{0}}$-dimensional normed
spaces considered in Corollary 1, none of ($F_{0}^{\mathbb{N}},\left\Vert
\cdot \right\Vert )$ can represent any (but its own) of their countable
quotient shape types with respect to \emph{normed} spaces. Namely, ($F_{0}^{%
\mathbb{N}},\left\Vert \cdot \right\Vert )$ itself represents its own
countable quotient shape type, while the countable quotient shape types of
all normed (Banach) spaces $X$, $\dim X=2^{\aleph _{0}}$, are
non-rudimentary. (They reduce to their unique non-rudimentary finite
quotient shape type). Hence, \textquotedblleft
philosophically\textquotedblright\ speaking, in the \textquotedblleft world
of Banach spaces\textquotedblright\ there is no \textquotedblleft
fine/close\textquotedblright approximation of a $2^{\aleph _{0}}$-object by
the \textquotedblleft shape-like\textquotedblright\ $\aleph _{0}$-objects,
i.e., there is only a \textquotedblleft coarse\textquotedblright\
approximation by the finite-dimensional objects. It might be the main cause
for the general difficulties in a practical application, especially, in
solving of partial differential equations!? So the theoretical
\textquotedblleft advantage\textquotedblright\ (there is no $\aleph _{0}$%
-dimensional Banach space) can turn back to be a practical disadvantage.
\end{remark}

\textbf{Question 2. }\emph{Given a Banach space }$X$\emph{\ and its closed
subspace }$Z$\emph{\ such that }$\dim Z=\dim X\geq 2^{\aleph _{k}}$\emph{\
and }$\dim (X/Z)\leq \aleph _{k}$\emph{\ (}$k\geq 1$\emph{; }$GCH$\emph{\
accepted), is there a closed complement of }$Z$\emph{\ in }$X?$

\section{Application}

It is well known that, in general, a continuous linear function of a
(closed) subspace of a normed space into a Banach space of dimension $\dim
\geq 2$ does not admit a continuous linear extension on the whole space. We
shall prove that under certain dimensional conditions such an extension
exists. Clearly, if the dimension of a codomain space is $\dim =1$, then an
extension exists without any additional condition (the Hahn-Banach theorem).

\begin{theorem}
\label{T3}Let $X$ be a normed space having $\dim X\geq \kappa \geq \aleph
_{0}$, and let $Z\trianglelefteq X$ be a (normed) subspace. If, by the
inclusion $e:Cl(Z)\hookrightarrow X$, induced, quotient shape morphism $%
S_{\kappa ^{-}}(e):Cl(Z)\rightarrow X$ is an isomorphism of $S_{\kappa ^{-}}(%
\mathcal{N})$, then for every Banach space $Y$, over the same field, such
that $\dim Y<\varkappa $, every continuous linear function $f:Z\rightarrow Y$
admits a continuous linear extension $\bar{f}:X\rightarrow Y$. Moreover, $%
\left\Vert \bar{f}\right\Vert =\left\Vert f\right\Vert $.
\end{theorem}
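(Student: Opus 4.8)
The plan is to leverage Lemma~\ref{L1} (in its $\kappa^{-}$--case) together with the hypothesis that $S_{\kappa^{-}}(e)\colon Cl(Z)\to X$ is a quotient shape isomorphism. First I would fix a $\mathcal{B}_{\kappa^{-}}$--expansion $\boldsymbol{p}_{\kappa^{-}}=(p_{\lambda})\colon X\to\boldsymbol{X}_{\kappa^{-}}=(X_{\lambda},p_{\lambda\lambda'},\Lambda_{\kappa^{-}})$ of $X$. By Lemma~\ref{L1}, restricting along the inclusion $j\colon Cl(Z)\hookrightarrow X$ gives a $\mathcal{B}_{\kappa^{-}}$--expansion $\boldsymbol{q}_{\kappa^{-}}=(p_{\lambda}j)\colon Cl(Z)\to\boldsymbol{X}_{\kappa^{-}}$ of $Cl(Z)$ with the \emph{same} expansion system. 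The quotient shape morphism $S_{\kappa^{-}}(e)$ is then represented, with respect to this pair of expansions, by the identity pro-morphism $\mathbf{1}\colon\boldsymbol{X}_{\kappa^{-}}\to\boldsymbol{X}_{\kappa^{-}}$; the hypothesis that $S_{\kappa^{-}}(e)$ is an isomorphism thus says nothing new about the systems themselves but guarantees that the factorization machinery below actually lands us back inside $X$.

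Next I would run the factorization argument. Given a Banach space $Y$ with $\dim Y<\kappa$ and a continuous linear $f\colon Z\to Y$, extend $f$ by continuity and linearity to $\bar{f}_{0}\colon Cl(Z)\to Y$ (this is the standard extension to the completion, unique and norm-preserving, $\|\bar{f}_{0}\|=\|f\|$). Since $\boldsymbol{q}_{\kappa^{-}}$ is a $\mathcal{B}_{\kappa^{-}}$--expansion of $Cl(Z)$ and $\dim Y<\kappa$, the morphism $\bar{f}_{0}$ factors (uniquely) through some expansion term: there is $\lambda\in\Lambda_{\kappa^{-}}$ and a continuous linear $h\colon X_{\lambda}\to Y$ with $h\,p_{\lambda}j=\bar{f}_{0}$, i.e. $h\,p_{\lambda}|_{Cl(Z)}=\bar{f}_{0}$. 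Now set $\bar{f}:=h\,p_{\lambda}\colon X\to Y$. This is continuous and linear as a composite of such maps, and it restricts on $Z\subseteq Cl(Z)$ to $\bar{f}_{0}|_{Z}=f$, so $\bar{f}$ is the desired extension. (Here is where the isomorphism hypothesis is morally used: it is the precise condition under which the term $X_{\lambda}$ produced from the $Cl(Z)$--expansion is simultaneously a term of an $X$--expansion, so that $p_{\lambda}$ is genuinely defined on all of $X$.)

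For the norm statement I would argue both inequalities. The inequality $\|f\|\le\|\bar{f}\|$ is immediate since $\bar{f}$ extends $f$ (and $Z$ is dense in $Cl(Z)$, so in fact $\|f\|=\|\bar{f}_{0}\|$). For the reverse, I expect one must use that $p_{\lambda}\colon X\to X_{\lambda}$ is a \emph{quotient} map of normed spaces: the term $X_{\lambda}$ carries the quotient norm $\|p_{\lambda}(x)\|=\inf\{\|x'\|: p_{\lambda}(x')=p_{\lambda}(x)\}$, so $\|p_{\lambda}\|\le 1$ and $\|\bar{f}\|=\|h\,p_{\lambda}\|\le\|h\|$. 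Then one shows $\|h\|\le\|\bar{f}_{0}\|$: because $p_{\lambda}j\colon Cl(Z)\to X_{\lambda}$ is itself (by the expansion structure and Lemma~\ref{L1}) a quotient map onto $X_{\lambda}$ with $\|p_{\lambda}j\|\le 1$, and $h$ is uniquely determined on its image, the quotient-norm computation on $X_{\lambda}$ gives $\|h(\xi)\|\le\|\bar{f}_{0}\|\,\|\xi\|$ for all $\xi\in X_{\lambda}$. Chaining these: $\|\bar{f}\|\le\|h\|\le\|\bar{f}_{0}\|=\|f\|$, and combined with the trivial direction, $\|\bar{f}\|=\|f\|$.

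The main obstacle I anticipate is the norm-equality assertion, not the mere existence of the extension: one has to be careful that the canonical $\mathcal{B}_{\kappa^{-}}$--expansion terms $X_{\lambda}$ really are quotients of $X$ by \emph{closed} subspaces equipped with the quotient norm (so that $p_{\lambda}$ is a metric quotient map, a contraction), and that the induced $h$ on $X_{\lambda}$ does not inflate the norm — which requires that $p_{\lambda}j\colon Cl(Z)\to X_{\lambda}$ be surjective with the same quotient-norm description, i.e. that $p_{\lambda}[Cl(Z)]=X_{\lambda}=p_{\lambda}[X]$. This surjectivity is exactly what the quotient-shape-isomorphism hypothesis on $S_{\kappa^{-}}(e)$ buys us via Lemma~\ref{L1}, so the isomorphism assumption is doing real work precisely at the point where it is needed. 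Once that is pinned down, everything else is the routine factorization-through-an-expansion argument already used in the proof of Lemma~\ref{L1}.
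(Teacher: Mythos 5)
Your overall strategy --- factor the completion--extension $\bar f_{0}\colon Cl(Z)\to Y$ through a term of an expansion of $Cl(Z)$ whose terms are simultaneously terms of an expansion of $X$, and then precompose with the projection defined on all of $X$ --- is the right one, but the central step is justified by an inapplicable lemma, and the point where the isomorphism hypothesis must enter is never actually argued. Lemma~\ref{L1} concerns a \emph{dense} subspace of $X$; here $Cl(Z)$ is a \emph{closed} subspace, in general proper and hence not dense, so Lemma~\ref{L1} does not give that $(p_{\lambda}j)\colon Cl(Z)\to\boldsymbol{X}_{\kappa^{-}}$ is an expansion of $Cl(Z)$. Nor can this be automatic: if restricting an expansion of $X$ to an arbitrary closed subspace always produced an expansion of that subspace, then $S_{\kappa^{-}}(e)$ would always be an isomorphism and the theorem would assert that every continuous linear map from every subspace into a low-dimensional Banach space extends --- which is false, as the paper itself recalls at the start of Section~4. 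Your remark that the isomorphism hypothesis ``says nothing new about the systems themselves'' is the symptom of this gap. The repair is either (a) to observe that $\boldsymbol{p}_{\kappa^{-}}\circ\lfloor e\rfloor=\boldsymbol{e}_{\kappa^{-}}\circ\boldsymbol{r}_{\kappa^{-}}$, where $\boldsymbol{r}_{\kappa^{-}}$ is an expansion of $Cl(Z)$ and $\boldsymbol{e}_{\kappa^{-}}$ is, by hypothesis, an isomorphism of $pro$-$\mathcal{N}_{\kappa^{-}}$, so that the composite $(p_{\lambda}e)$ is again an expansion (an expansion followed by a pro-isomorphism is an expansion); or (b) the paper's route: represent $S_{\kappa^{-}}(e)$ by a level morphism $(1_{N},u_{\nu})$ and apply the Morita lemma to obtain, for each $\nu$, a $\nu'\geq\nu$ and a backward morphism $v_{\nu}\colon X_{\nu'}^{\prime}\to Z_{\nu}^{\prime}$ with $u_{\nu}v_{\nu}=p_{\nu\nu'}^{\prime}$ and $v_{\nu}u_{\nu'}=r_{\nu\nu'}^{\prime}$, then set $\bar f=f^{\nu}v_{\nu}p_{\nu'}^{\prime}$ where $f^{\nu}r_{\nu}^{\prime}=f'$. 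One of these must actually be carried out; as written, your proof uses the hypothesis only rhetorically.

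On the norm equality, your chain has an inequality pointing the wrong way. Since $p_{\lambda}$ is a metric quotient map you correctly get $\Vert\bar f\Vert=\Vert h\,p_{\lambda}\Vert=\Vert h\Vert$; but from $\bar f_{0}=h\,(p_{\lambda}j)$ and $\Vert p_{\lambda}j\Vert\leq 1$ one can only conclude $\Vert\bar f_{0}\Vert\leq\Vert h\Vert$, which is the trivial direction. To obtain $\Vert h\Vert\leq\Vert\bar f_{0}\Vert=\Vert f\Vert$ you would need $p_{\lambda}j$ to carry the open unit ball of $Cl(Z)$ \emph{onto} the open unit ball of $X_{\lambda}$, i.e.\ to be a metric quotient rather than merely a norm-one surjection, and this is an additional fact that has to be extracted from the structure of the canonical expansions (it is the analogue of the paper's assertion $\Vert v_{\nu}\Vert=1$). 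As it stands, your argument establishes only $\Vert\bar f\Vert\geq\Vert f\Vert$.
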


\begin{proof}
Let $S_{\text{$\kappa ^{-}$}}(e):Cl(Z)\rightarrow X$ be an isomorphism of $%
Sh_{\text{$\kappa ^{-}$}}(\mathcal{N})$. Let $Y\in Ob(\mathcal{B}_{\text{$%
\kappa ^{-}$}})$ and let $f\in \mathcal{N}(Z,Y)$. We may assume, without
loss of generality, that $S_{\text{$\kappa ^{-}$}}(e)$ is represented by a
level morphism

$(1_{N},u_{\nu }):\boldsymbol{Z}_{\kappa ^{-}}^{\prime }=(Z_{\nu }^{\prime
},r_{\nu \nu ^{\prime }}^{\prime },N)\rightarrow (X_{\nu }^{\prime },p_{\nu
\nu ^{\prime }}^{\prime },N)=\boldsymbol{X}_{\kappa ^{-}}^{\prime }$

\noindent of $inv$-$\mathcal{N}_{\text{$\kappa ^{-}$}}$, where $\boldsymbol{Z%
}_{\kappa ^{-}}^{\prime }$ and $\boldsymbol{X}_{\kappa ^{-}}^{\prime }$ are
the expansion objects of $\kappa ^{-}$-expansions

$\boldsymbol{r}_{\kappa ^{-}}^{\prime }=(r_{\nu }^{\prime
}):Cl(Z)\rightarrow \boldsymbol{Z}_{\kappa ^{-}}^{\prime }$ \quad and

$\boldsymbol{p}_{\kappa ^{-}}^{\prime }=(p_{\nu }^{\prime }):X\rightarrow 
\boldsymbol{X}_{\kappa ^{-}}^{\prime }$

\noindent of $Cl(Z)$ and $X$ respectively. Then, for every $\nu \in N$, $%
p_{\nu }^{\prime }e=u_{\nu }r_{\nu }^{\prime }$ and, for every related pair $%
\nu \leq \nu ^{\prime }$ in $N$, $u_{\nu }r_{\nu \nu ^{\prime }}^{\prime
}=p_{\nu \nu ^{\prime }}^{\prime }u_{\varpi ^{\prime }}$. By the well-known
Morita lemma, for every $\nu \in N$, there exist a $\nu ^{\prime }\geq \nu $
and a $v_{\nu }\in \mathcal{N}(X_{\nu ^{\prime }}^{\prime },Z_{\nu }^{\prime
})$ such that the diagram

$%
\begin{array}{ccc}
Z_{\nu }^{\prime } & \overset{r_{\nu \nu ^{\prime }}^{\prime }}{\leftarrow }
& Z_{\nu ^{\prime }}^{\prime } \\ 
u_{\nu }\downarrow & \nwarrow v_{\nu } & \downarrow u_{\nu ^{\prime }} \\ 
X_{\nu }^{\prime } & \overset{p_{\nu \nu ^{\prime }}^{\prime }}{\leftarrow }
& X_{\nu ^{\prime }}^{\prime }%
\end{array}%
$

\noindent in $\mathcal{N}$ commutes, i.e., $u_{\nu }v_{\nu }=p_{\nu \nu
^{\prime }}^{\prime }$ and $v_{\nu }u_{\nu ^{\prime }}=r_{\nu \nu ^{\prime
}}^{\prime }$. Denote by $i:Z\hookrightarrow Cl(Z)$ the inclusion, and by $%
f^{\prime }:Cl(Z)\rightarrow Y$ the unique continuous linear extension of $f$%
, i.e., $f^{\prime }i=f$ (Lemma 1). Since $\boldsymbol{r}^{\prime }$ is $%
\kappa ^{-}$-expansion of $Cl(Z)$ and $\dim Y<\kappa $, there exist a $\nu
\in N$ and an $f^{\nu }\in \mathcal{N}(Z_{\nu }^{\prime },Y)$ such that $%
f^{\nu }r_{\nu }^{\prime }=f^{\prime }$. Put

$\bar{f}=f^{\nu }v_{\nu }p_{\nu ^{\prime }}^{\prime }:X\rightarrow Y$.

\noindent Then

$\bar{f}e=f^{\nu }v_{\nu }p_{\nu ^{\prime }}^{\prime }e=f^{\nu }v_{\nu
}u_{\nu ^{\prime }}r_{\nu ^{\prime }}^{\prime }=f^{\nu }r_{\nu \nu ^{\prime
}}^{\prime }r_{\nu ^{\prime }}^{\prime }=f^{\nu }r_{\nu }^{\prime
}=f^{\prime }$.

\noindent Consequently, $\bar{f}(ei)=(\bar{f}e)i=f^{\prime }i=f$, that is a
desired extension. Recall that all the projections and bonding morphisms in
a canonical expansion are the appropriate non-trivial quotient morphisms
(the initial one onto $\{\theta \}$ may be dropped and ignored), and hence,
their norm is $1$. Further, since $e:Cl(Z)\hookrightarrow X$ is the
inclusion, the induced canonical $(\varphi ,e_{\lambda })$ inv-morphism
consists of the quotient morphisms as well, and thus, $\left\Vert e_{\lambda
}\right\Vert =1$, $\lambda \in \Lambda _{\kappa ^{-}}$ (those onto the
initial term or from the initial term may be dropped and ignored). Further,
the construction of a level morphism uses the already existing morphisms
only. Therefore,

$\left\Vert u_{\nu }\right\Vert =\left\Vert u_{\nu ^{\prime }}\right\Vert
=\left\Vert v_{\nu }\right\Vert =1,\nu ,\nu ^{\prime }\in N$.

\noindent Finally, $\left\Vert e\right\Vert =1$, and the extension $%
f^{\prime }$ of $Z$ on $Cl(Z)$ does not affect the norm $\left\Vert
f\right\Vert $. Consequently,

$\left\Vert \bar{f}\right\Vert =\left\Vert f^{\nu }v_{\nu }p_{\nu ^{\prime
}}^{\prime }\right\Vert =\left\Vert f^{\nu }\right\Vert =\left\Vert
f\right\Vert $,

\noindent that completes the proof.
\end{proof}

\begin{theorem}
\label{T4}Let $X$ be a normed space having $\dim X=\kappa \geq \aleph _{0}$,
and let $Z\trianglelefteq X$ be a (normed) subspace such that $\dim
Cl(Z)=\dim X$ and $\dim (X/Cl(Z))<\kappa $, and let $Y$ be a Banach space
(over the same field) having $\dim Y<\kappa $.

\noindent (i) If $X$ is a separable Banach space and $\max \{\dim
(X/Cl(Z),\dim Y)\leq \aleph _{0}$, then every continuous linear function $%
f:Z\rightarrow Y$ admits a continuous linear extension $\bar{f}:X\rightarrow
Y$ such that $\left\Vert \bar{f}\right\Vert =\left\Vert f\right\Vert $.

\noindent (ii) If $X$ is a bidual-like Banach space, then every continuous
linear function $f:Z\rightarrow Y$ admits a continuous linear extension $%
\bar{f}:X\rightarrow Y$ such that $\left\Vert \bar{f}\right\Vert =\left\Vert
f\right\Vert $.
\end{theorem}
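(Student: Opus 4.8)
The plan is to reduce Theorem 4 to Theorem 3 by verifying, in each of the two cases, that the inclusion $e\colon Cl(Z)\hookrightarrow X$ induces a quotient shape isomorphism at the appropriate level, and then quoting the extension conclusion (including the norm equality) verbatim from Theorem 3. The dimensional hypotheses $\dim Cl(Z)=\dim X=\kappa$ and $\dim(X/Cl(Z))<\kappa$ are exactly the hypotheses under which the results of Section~3 give such an isomorphism, so the whole content of the proof is the bookkeeping of \emph{which} quotient shape ($\kappa^{-}$-shape, finite shape, or countable shape) is relevant and \emph{which} of the earlier results supplies the isomorphism.

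For part (i), $X$ is a separable Banach space, so $\dim X=2^{\aleph_0}$, and the standing assumption $\dim Y<\kappa$ together with $\max\{\dim(X/Cl(Z)),\dim Y\}\le\aleph_0$ forces us into the finite quotient shape regime: since $\kappa=2^{\aleph_0}$, one has $Sh_{\kappa^{-}}(\mathcal B)=Sh_{\text{\b 0}}(\mathcal B)$. First I would apply Theorem~2(ii) (or directly Lemma~3(i), noting $Cl(Z)$ is again separable with $\dim Cl(Z)=\dim X$) to conclude $Sh_{\text{\b 0}}(Cl(Z))=Sh_{\text{\b 0}}(X)$; then I would check that this finite shape isomorphism is induced by the inclusion $e$ — this is the point where one must invoke the closed-complement / direct-sum machinery of Lemma~3, since $\dim(X/Cl(Z))<\aleph_0$ makes $Cl(Z)$ a closed subspace of finite codimension, hence a direct summand, and the index function induced by $e$ on the canonical finite expansions is cofinal. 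Once $S_{\kappa^{-}}(e)$ is an isomorphism of $S_{\kappa^{-}}(\mathcal N)$, Theorem~3 applied with this $\kappa$ hands us the extension $\bar f\colon X\to Y$ with $\|\bar f\|=\|f\|$.

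For part (ii), $X$ is bidual-like with $\dim X=\kappa\ge\aleph_0$; if $\kappa\le 2^{\aleph_0}$ the argument is the same finite-shape reduction as in (i) (using that $Cl(Z)$, being a closed finite-codimensional subspace of a bidual-like space, is again bidual-like, so Lemma~4(ii) and Theorem~2(ii) apply), while if $\kappa>2^{\aleph_0}$ one uses Lemma~3(ii) directly: $e\colon Cl(Z)\hookrightarrow X$ is a closed embedding with $\dim(X/e[Cl(Z)])<\kappa$, so $Sh_{\kappa^{-}}(Cl(Z))=Sh_{\kappa^{-}}(X)$ with the isomorphism \emph{induced by $e$}. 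Either way $S_{\kappa^{-}}(e)$ is an isomorphism of $S_{\kappa^{-}}(\mathcal N)$, and Theorem~3 finishes the proof, giving $\bar f\colon X\to Y$ with $\|\bar f\|=\|f\|$.

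The main obstacle I anticipate is not the extension step — Theorem~3 is a black box here — but establishing that the shape isomorphism is \emph{induced by the inclusion $e$} rather than merely that the two shapes coincide abstractly. Theorem~3's hypothesis is specifically about $S_{\kappa^{-}}(e)$ being an isomorphism, so one cannot simply cite Theorem~2; one must trace through the canonical construction of the $\kappa^{-}$-expansions of $Cl(Z)$ and $X$ and verify that the index function $\varphi$ attached to the $e$-induced inv-morphism $(\varphi,e_\lambda)$ is cofinal. This is exactly the claim proved inside Lemma~3(ii) (for $\kappa>2^{\aleph_0}$) and the cofinality argument inside Lemma~3(i)/Theorem~2 (for $\kappa\le 2^{\aleph_0}$, via closed complements in finite codimension), so the work is really in assembling those pieces under the correct case split on $\kappa$ and $\dim Y$, and in checking that $Cl(Z)$ stays inside $s\mathcal B$ (resp.\ $b\mathcal B$) so that those lemmas are applicable.
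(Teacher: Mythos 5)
Your proposal matches the paper's proof in both structure and substance: the paper likewise reduces everything to Theorem 3, observes that the real work is showing the inclusion-induced morphism $S_{\kappa^{-}}(e)$ (not merely the abstract equality of shapes) is an isomorphism, and establishes this by verifying cofinality of the induced index function on the canonical expansions via the claim that $V=Cl(Z)\cap U$ works — exactly the argument from Lemma 3(ii) you point to — handling part (i) as the $\kappa=2^{\aleph_0}$ instance of the same claim. Your closing remark about needing $Cl(Z)$ to remain in $s\mathcal{B}$ (resp. $b\mathcal{B}$) is a legitimate point of care that the paper glosses over when invoking Theorem 2(iii).
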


\begin{proof}
Firstly notice that, in general, a desired (unique) ontinuous linera
extension on $Cl(Z)$ exists by Lemma 1. Further, in the case of $\dim
(X/Cl(Z))<\aleph _{0}$ there is no need for the assumption $\dim Cl(Z)=\dim
X $. Namely, $\dim X\geq \aleph _{0}$ and $\dim (X/Cl(Z))<\aleph _{0}$ imply 
$\dim Cl(Z)=\dim X$. Furthermore, there is a rather simple proof of that
special case without using Theorem 3. Nevertheless, we want to use Theorem 3
in our proof. Denote by $e:Cl(Z)\hookrightarrow X$ the (closed continuous
linear) inclusion. By Theorem 2 (i), it follows that $Sh_{\text{\b{0}}%
}(Cl(Z))=Sh_{\text{\b{0}}}(X)$.

\noindent (i). Since $X\in Ob(s\mathcal{B})$, it follows that either $\dim
X<\aleph _{0}$ or $\dim X=2^{\aleph _{0}}$. In the first
(finite-dimensional) case, the statement is obviously true. Let $\dim
X=2^{\aleph _{0}}$. Since $Y$ is a Banach space having $\dim Y<\dim X$, it
follows that $\dim Y<\aleph _{0}$. According to Theorem 3, it suffices to
prove that the induced quotient shape morphism

$F\equiv S_{\text{\b{0}}}(e):Cl(Z)\rightarrow X$

\noindent of $Sh_{\text{\b{0}}}(\mathcal{B})\subseteq Sh_{\text{\b{0}}}(%
\mathcal{N})$ is an isomorphism. We shall prove this by proving the analogue
claim in the proof of statement (ii).

\noindent (ii). Since $X\in Ob(b\mathcal{B})$ and $e:Cl(Z)\rightarrow X$ a
closed embedding such that $\dim Cl(Z)=\dim X$ and $\dim (X/Cl(Z))<\dim
X=\kappa $, Theorem 2 (iii) implies that $Sh_{\kappa ^{-}}(Cl(Z))=Sh_{\kappa
^{-}}(X)$. According to Theorem 3, it remains to prove that the quotiemt
shape morphism

$F\equiv S_{\kappa ^{-}}(e):Cl(Z)\rightarrow X$

\noindent of $S_{\kappa ^{-}}(b\mathcal{B})\subseteq S_{\kappa ^{-}}(%
\mathcal{N})$ is an isomorphism. In order to cover the both statements,
assume that $X$ is a Banach space. Let

$\boldsymbol{r}_{\text{$\kappa ^{-}$}}=(r\mu ):Cl(Z)\rightarrow \boldsymbol{Z%
}_{\text{$\kappa ^{-}$}}=(Z_{\mu },r_{\mu \mu ^{\prime }},M_{\text{$\kappa
^{-}$}})$ \quad and

$\boldsymbol{p}_{\text{$\kappa ^{-}$}}=(p_{\lambda }):X\rightarrow 
\boldsymbol{X}_{\text{$\kappa ^{-}$}}=(X_{\lambda },p_{\lambda \lambda
^{\prime }},\Lambda _{\text{$\kappa ^{-}$}})$

\noindent be the canonical $\mathcal{N}_{\text{$\kappa ^{-}$}}$-expansions
of $Cl(Z)$ and $X$ respectively. (They are, actually, the $\mathcal{B}%
_{\kappa ^{-}}$-expansions, because $Cl(Z)$ is a Banach space as well, and,
consequently, all the quotient spaces by closed subspaces are Banach
spaces.) Then the induced pro-morphism $\boldsymbol{e}_{\kappa ^{-}}:%
\boldsymbol{Z}_{\kappa ^{-}}\rightarrow \boldsymbol{X}_{\kappa ^{-}}$ of $e$
is the equivalence class of the inv-morphism

$(\varphi ,e_{\lambda }):\boldsymbol{Z}_{\text{$\kappa ^{-}$}}\rightarrow 
\boldsymbol{X}_{\text{$\kappa ^{-}$}}$,

\noindent where $\varphi :\Lambda _{\text{$\kappa ^{-}$}}\rightarrow M_{%
\text{$\kappa ^{-}$}}$ is obtained by the expansion factorization property
(E1) for each $p_{\lambda }e=e_{\lambda }r_{\varphi (\lambda )}$. Therefore,
by the construction of a canonical $\mathcal{N}_{\text{$\kappa ^{-}$}}$%
-expansion, for every $\lambda \in \Lambda _{\text{$\kappa ^{-}$}}$,
corresponding to a closed subspace $U_{\lambda }\trianglelefteq X$ such that 
$\dim U_{\lambda }=\dim X$ and $\dim (X/U_{\lambda })<\kappa $, the class $%
[z]_{\varphi (\lambda )}=z+(Cl(Z)\cap U_{\lambda })\in Z_{\varphi (\lambda
)} $ goes by $e_{\lambda }:Z_{\varphi (\lambda )}\rightarrow X_{\lambda }$
to the class $[z]_{\lambda }=z+U_{\lambda }\in X_{\lambda }$. Further, if $%
\lambda \leq \lambda ^{\prime }$, then $U_{\lambda ^{\prime
}}\trianglelefteq U_{\lambda }$ (having the same dimensional properties),
and hence, $Cl(Z)\cap U_{\lambda ^{\prime }}\trianglelefteq Cl(Z)\cap
U_{\lambda }$ implying $\varphi (\lambda )\leq \varphi (\lambda ^{\prime })$
and $e_{\lambda }r_{\varphi (\lambda )\varphi (\lambda ^{\prime
})}=p_{\lambda \lambda ^{\prime }\lambda }e_{\varphi (\lambda ^{\prime })}$.
On the other side, every $\mu \in M_{\text{$\kappa ^{-}$}}$, corresponding
to a closed subspace $V_{\mu }\trianglelefteq Cl(Z)$ such that $\dim V_{\mu
}=\dim Cl(Z)$ and $\dim (Cl(Z)/V_{\mu })<\kappa $, is a $\lambda _{\mu }\in
\Lambda _{\text{$\kappa ^{-}$}}$, corresponding to a closed $U_{\lambda
_{\mu }}=V_{\mu }\trianglelefteq X$ such that $\dim U_{\lambda _{\mu }}=\dim
X$ and $\dim (X/U_{\lambda _{\mu }})<\kappa $. Thus, there exists a
canonical \textquotedblleft inclusion\textquotedblright\ $\psi :M_{\text{$%
\kappa ^{-}$}}\hookrightarrow \Lambda _{\text{$\kappa ^{-}$}}$, $\psi (\mu
)=\lambda _{\mu }$. Put $\psi \lbrack M_{\text{$\kappa ^{-}$}}]\equiv
\Lambda _{\text{$\kappa ^{-}$}}^{\prime }\subseteq \Lambda _{\text{$\kappa
^{-}$}}$. It remains to prove that $\psi $ is a cofinal function. Indeed, in
that case, the codomain restriction

$\boldsymbol{e}_{\text{$\kappa ^{-}$}}^{\prime }:\boldsymbol{Z}_{\text{$%
\kappa ^{-}$}}\rightarrow \boldsymbol{X}_{\text{$\kappa ^{-}$}}^{\prime }$

\noindent of $\boldsymbol{e}_{\text{$\kappa ^{-}$}}$, i.e., the equivalence
class of the codomain restriction

$(\varphi ^{\prime },e_{\lambda }):\boldsymbol{Z}_{\text{$\kappa ^{-}$}%
}\rightarrow \boldsymbol{X}_{\text{$\kappa ^{-}$}}^{\prime }$,

\noindent of $(\varphi ,e_{\lambda })$, where the bijection $\varphi
^{\prime }:\Lambda _{\text{$\kappa ^{-}$}}^{\prime }\rightarrow M_{\text{$%
\kappa ^{-}$}}$, $\varphi ^{\prime }(\lambda =\lambda _{\mu })=\mu $, is the
domain restriction of the index function $\varphi $, will be an isomorphism
of $pro$-$\mathcal{B}$, and consequently,

$S_{\text{$\kappa ^{-}$}}(e)=\left\langle \boldsymbol{e}_{\text{$\kappa ^{-}$%
}}\right\rangle =\left\langle \boldsymbol{e}_{\text{$\kappa ^{-}$}}^{\prime
}\right\rangle :Cl(Z)\rightarrow X$

\noindent will be a desired quotient shape isomorphism. Namely, in that
case, the restriction

$\boldsymbol{p}_{\text{$\kappa ^{-}$}}^{\prime }=(p_{\lambda }):X\rightarrow 
\boldsymbol{X}_{\text{$\kappa ^{-}$}}^{\prime }=(X_{\lambda },p_{\lambda
\lambda ^{\prime }},\Lambda _{\text{$\kappa ^{-}$}}^{\prime })$

\noindent will be an $\mathcal{N}_{\text{$\kappa ^{-}$}}$-expansion of $X$.
The proof now reduces to the following claim:

\emph{For every closed subspace }$U\trianglelefteq X$\emph{\ such that }$%
\dim U=\dim X$\emph{\ and }$\dim (X/U)<\kappa $\emph{, there exists a closed
subspace }$V\trianglelefteq Cl(Z)$\emph{\ such that }$\dim V=\dim Cl(Z)$%
\emph{\ and }$\dim (Cl(Z)/V)<\kappa $\emph{, and in addition, }$%
V\trianglelefteq U$\emph{, }$\dim V=\dim X$\emph{\ and }$\dim (X/V)<\kappa $%
\emph{. }

\noindent Given such a $U$, put $V=Cl(Z)\cap U$, and the verification works
in the same way as in the proof of Lemma 3 (ii).
\end{proof}

At the end, we give a rather general example as a corollary.

\begin{corollary}
\label{C2}(i) Given a $p,1\leq p<\infty $, let $X$ be a separable Banach
space that admits a continuous injection $i:l_{p}\rightarrow X$ such that $%
\dim (X/Cl(i[l_{p}]))\leq \aleph _{0}$, and let $Y$ be a finite-dimensional
normed space over the same field. Then, for each $r,$ $1\leq r\leq p$, every
continuous linear function $f:F_{0}^{\mathbb{N}}(r)\rightarrow Y$ ($%
f:l_{r}\rightarrow Y$ as well), admits a continuous linear extension $\bar{f}%
:X\rightarrow Y$ through all the $l_{r}$ ($l_{s},$ $r\leq s\leq p$);

\noindent (ii) Let $X$ be a separable Banach space that admits a continuus
injection $i:c_{0}\rightarrow X$ such that $\dim (X/Cl(i[c_{0}]))\leq \aleph
_{0}$, and let $Y$ be a finite-dimensional normed space over the same field.
Then every continuous linear function $f:F_{0}^{\mathbb{N}}(\infty
)\rightarrow Y$ ($f:l_{p}(\infty )\rightarrow Y$ as well, $1\leq p<\infty $%
,) admits a continuous linear extension $\bar{f}:X\rightarrow Y$ through all
the $l_{p}(\infty )$, $1\leq p<\infty $ ($l_{p^{\prime }}(\infty ),$ $p\leq
p^{\prime }<\infty $);
\end{corollary}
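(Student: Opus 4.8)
The plan is to deduce Corollary~\ref{C2} directly from Theorem~\ref{T4}, after checking that its hypotheses are met in each of the two situations. First I would recall the relevant facts about the concrete spaces involved: for $1\le r\le s\le p<\infty$ one has the chain of (continuous, norm-decreasing) inclusions $F_0^{\mathbb N}(r)\hookrightarrow l_r\hookrightarrow l_s\hookrightarrow l_p$ (since $\|x\|_p\le\|x\|_s\le\|x\|_r$ on finitely supported sequences), and similarly $F_0^{\mathbb N}(\infty)\hookrightarrow l_p(\infty)\hookrightarrow l_{p'}(\infty)$ for $p\le p'<\infty$, together with $F_0^{\mathbb N}(\infty)\hookrightarrow c_0$. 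All of these intermediate normed spaces have algebraic dimension $\le 2^{\aleph_0}$; the finitely supported ones have dimension $\aleph_0$, while $l_r,l_p(\infty),c_0$ have dimension $2^{\aleph_0}$. In every case the completion in the second dual is one of the separable Banach spaces $l_p$, $l_{p'}(\infty)$, $c_0$, which has algebraic dimension $2^{\aleph_0}=\dim X$.

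Next I would set up the application of Theorem~\ref{T4}(i) with the ambient space $X$ (a separable Banach space by hypothesis), the subspace $Z$ taken to be the image under $i$ of whichever intermediate normed space we start from (e.g. $Z=i[F_0^{\mathbb N}(r)]$ or $Z=i[l_r]$), and the codomain $Y$ the given finite-dimensional normed space. Here $i$ is a continuous injection, so $Z\trianglelefteq X$ is a genuine subspace; its closure $Cl(Z)$ in $X$ contains $Cl(i[l_r])$, and since $Cl(i[l_p])$ has the hypothesized codimension $\le\aleph_0$ in $X$, monotonicity of codimension under passing to a larger closed subspace gives $\dim\bigl(X/Cl(Z)\bigr)\le\aleph_0$. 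Because $\dim X=2^{\aleph_0}\ge\aleph_0$ and $\dim(X/Cl(Z))<\aleph_0$ would already force $\dim Cl(Z)=\dim X$; in the remaining case $\dim(X/Cl(Z))=\aleph_0$ one uses that $Cl(Z)$, containing a copy of $l_r$, has dimension $2^{\aleph_0}=\dim X$. Thus the hypotheses $\dim Cl(Z)=\dim X$, $\dim(X/Cl(Z))<\kappa=2^{\aleph_0}$, $\dim Y<\kappa$, and $\max\{\dim(X/Cl(Z)),\dim Y\}\le\aleph_0$ of Theorem~\ref{T4}(i) are all satisfied, so every continuous linear $f:Z\to Y$ extends to $\bar f:X\to Y$ with $\|\bar f\|=\|f\|$.

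Finally I would address the phrase ``\emph{through all the $l_s$}'': the point is that the extension factors through each intermediate space. Given $f:F_0^{\mathbb N}(r)\to Y$, apply Theorem~\ref{T4} not to the small subspace directly but iteratively (or simultaneously): first extend $f$ along $F_0^{\mathbb N}(r)\hookrightarrow l_r$ (continuous linear extension to the completion, Lemma~\ref{L1}), then observe that for each $s$ with $r\le s\le p$ the inclusion $l_r\hookrightarrow l_s$ is dense-image continuous, so restricting the final $\bar f:X\to Y$ to the image $i[l_s]$ produces the intermediate extension, and all these restrictions are compatible along the chain by uniqueness of continuous linear extension on dense subspaces. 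Part (ii) is entirely parallel, with $c_0$ in place of $l_p$ and the chain $F_0^{\mathbb N}(\infty)\hookrightarrow l_p(\infty)\hookrightarrow l_{p'}(\infty)\hookrightarrow c_0$; one checks $\dim(X/Cl(i[c_0]))\le\aleph_0$ as hypothesized and invokes Theorem~\ref{T4}(i) again.

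I expect the only genuine obstacle to be bookkeeping rather than substance: one must be careful that the various inclusions between the $l_r$-type spaces are indeed continuous with the stated norm inequalities (true, and standard) and that ``closure'' is always taken inside the fixed ambient $X$ so that codimension estimates stay monotone; the factorization ``through all the $l_s$'' is then just a matter of restricting the single extension $\bar f$ and citing uniqueness. No new ideas beyond Theorem~\ref{T4} and Lemma~\ref{L1} are needed.
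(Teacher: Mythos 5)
Your proposal is correct and follows essentially the same route as the paper: reduce to Theorem \ref{T4} with $Z$ the (image of the) given dense subspace, using that $Cl(i[F_0^{\mathbb N}(r)])=Cl(i[l_r])=Cl(i[l_p])$ in $X$ so the closure and codimension hypotheses transfer. The only nit is that your monotonicity step should invoke $Cl(Z)\supseteq Cl(i[l_p])$ (which follows from density of $F_0^{\mathbb N}$ in $l_p$ and continuity of $i$, giving in fact equality of the closures), not merely $Cl(Z)\supseteq Cl(i[l_r])$; with that adjustment the argument matches the paper's.
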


\begin{proof}
(i).Observe that the inclusions $i_{p}^{0r}:F_{0}^{\mathbb{N}}(r)\rightarrow
l_{p}$ and $i_{p}^{r}:l_{r}\rightarrow l_{p}$, $1\leq r\leq p<\infty $, are
continuous, and that, in $l_{p}$, $Cl(F_{0}^{\mathbb{N}}(r))=Cl(l_{r})=l_{p}$%
. Now apply Theorem 4 to $Z=F_{0}^{\mathbb{N}}(r)$ ($Z=l_{r}$).

\noindent (ii). Notice that

$F_{0}^{\mathbb{N}}(\infty )\trianglelefteq l_{1}(\infty )\trianglelefteq
\cdots \trianglelefteq l_{p}(\infty )\trianglelefteq \cdots \trianglelefteq
l_{p^{\prime }}(\infty )\trianglelefteq \cdots \trianglelefteq c_{0}$

\noindent are proper subspaces of $c_{0}$ and that $Cl(F_{0}^{\mathbb{N}%
}(\infty ))=l_{p}(\infty )$ in $l_{p}(\infty )$, and $Cl(F_{0}^{\mathbb{N}%
}(\infty ))=Cl(l_{p}(\infty ))=c_{0}$ in $l_{\infty }$, $1\leq p<\infty $.
By applying Theorem 4 to $Z=F_{0}^{\mathbb{N}}(\infty )$ and $Z=l_{p}(\infty
)$ respectively ($Y$ is a Banach space), the conclusion follows.

\noindent (Observe that, in $l_{\infty }$, $Cl(F_{0}^{\mathbb{N}%
}(p))=Cl(l_{p})=c_{0}$, and that $\dim (c/c_{0})=1$, where $c\trianglelefteq
l_{\infty }$ is the subspace of all convergent sequences in $F$, which is
closed. Hence, $X=c$ is a concrete example for (i) and (ii).) \bigskip
\end{proof}

\begin{center}
\textbf{References\smallskip }
\end{center}

\noindent \lbrack 1] K. Borsuk, \textit{Concerning homotopy properties of
compacta}, Fund. Math. \textbf{62} (1968), 223-254.

\noindent \lbrack 2] K. Borsuk, \textit{Theory of Shape}, Monografie
Matematyczne \textbf{59}, Polish Scientific Publishers, Warszawa, 1975.

\noindent \lbrack 3] J.-M. Cordier and T. Porter, \textit{Shape Theory:
Categorical Methods of Approximation}, Ellis Horwood Ltd., Chichester, 1989.
(Dover edition, 2008.)

\noindent \lbrack 4] J. Dugundji, \textit{Topology}, Allyn and Bacon, Inc.
Boston, 1978.

\noindent \lbrack 5] Dydak and J. Segal, \textit{Shape theory: An
introduction}, Lecture Notes in Math. \textbf{688}, Springer-Verlag, Berlin,
1978.

\noindent \lbrack 6] H. Herlich and G. E. Strecker, \textit{Category Theory,
An Introduction}, Allyn and Bacon Inc., Boston, 1973.

\noindent \lbrack 7] N. Kocei\'{c} Bilan and N. Ugle\v{s}i\'{c}, \textit{The
coarse shape}, Glasnik. Mat. \textbf{42}(\textbf{62}) (2007), 145-187.

\noindent \lbrack 8] E. Kreyszig, \textit{Introductory Functional Analysis
with Applications}, John Wiley \& Sons, New York, 1989.

\noindent \lbrack 9] S. Kurepa, \textit{Funkcionalna analiza : elementi
teorije operatora}, \v{S}kolska knjiga, Zagreb, 1990.

\noindent \lbrack 10] S. Marde\v{s}i\'{c} and J. Segal, \textit{Shape Theory}%
, North-Holland, Amsterdam, 1982.

\noindent \lbrack 11] W. Rudin, \textit{Functional Analysis, Second Edition}%
, McGraw-Hill, Inc., New York, 1991.

\noindent \lbrack 12] N. Ugle\v{s}i\'{c}, \textit{The shapes in a concrete
category}, Glasnik. Mat. Ser. III \textbf{51}(\textbf{71}) (2016), 255-306.

\noindent \lbrack 13] N. Ugle\v{s}i\'{c}, \textit{On the quotient shape of
vectorial spaces}, Rad HAZU - Matemati\v{c}ke znanosti, Vol. \textbf{21} = 
\textbf{532} (2017), 179-203.

\noindent \lbrack 14] N. Ugle\v{s}i\'{c}, \textit{On the quotient shapes of
topological spaces}, Top. Appl. \textbf{239} (2018), 142-151.

\noindent \lbrack 15] N. Ugle\v{s}i\'{c}, \textit{The quotient shapes of }$%
l_{p}$\textit{\ and }$L_{p}$\textit{\ spaces}, Rad HAZU. Matemati\v{c}ke
znanosti, Vol. \textbf{27} = \textbf{536} (2018), 149-174.

\noindent \lbrack 16] N. Ugle\v{s}i\'{c} and B. \v{C}ervar, \textit{The
concept of a weak shape type}, International J. of Pure and Applied Math. 
\textbf{39} (2007), 363-428.

\end{document}